\setlist[itemize]{noitemsep}
\setlist[enumerate]{noitemsep}
\newcommand*{\barfix}[2][.175ex]{%
  \mathpalette{\@barfix{#1}}{#2}%
}
\newcommand*{\@barfix}[3]{%
  \vbox{%
    \kern#1\relax
    \hbox{$#2#3\m@th$}%
  }%
}
\newtheorem{theorem}{Theorem}
\newtheorem{thm}{Theorem}[section]
\newtheorem{lemma}[thm]{Lemma}
\theoremstyle{definition}
\newtheorem{remark}[thm]{Remark}
\newcommand{\footremember}[2]{%
    \footnote{#2}
    \newcounter{#1}
    \setcounter{#1}{\value{footnote}}%
}
\newcommand{\footrecall}[1]{%
    \footnotemark[\value{#1}]%
} 
\title{\vspace{-1.5cm}A large hole in pseudo-random graphs}
\author{%
\and Sahar Diskin \footremember{alley1}{\scriptsize{School of Mathematical Sciences, Tel Aviv University, Tel Aviv 6997801, Israel. Emails: sahardiskin@mail.tau.ac.il, krivelev@tauex.tau.ac.il, markbreit@mail.tau.ac.il.}}%
\and Michael Krivelevich \footrecall{alley1}%
\and Itay Markbreit \footrecall{alley1}
\and Maksim Zhukovskii \footremember{alley2}{\scriptsize{School of Computer Science, University of Sheffield, UK. Email: m.zhukovskii@sheffield.ac.uk.}}%
}
\begin{document}
\date{\vspace{-1cm}}

\maketitle
\begin{abstract}
We show that there exist constants $\delta_1,\delta_2>0$ such that if $G$ is an $(n,d,\lambda)$-graph with $\lambda/d\le\delta_1$, then $G$ contains an induced cycle of length at least $\delta_2n/d$. We further demonstrate that, up to a constant factor, this is best possible. Utilising our techniques, we derive that the number of non-isomorphic induced subgraphs of such $G$ is at least exponential in $n\log d/d$, and further demonstrate that this is tight up to a constant factor in the exponent.
\end{abstract}

\section{Introduction and main results}
The study of extremal problems about induced subgraphs is a popular theme in combinatorics. Of particular interest is finding the lengths of a longest induced path and of a longest induced cycle (also called a \textit{hole}) in a given graph $G$. Two key examples, that have been studied in the past, are when $G$ is the $d$-dimensional hypercube, wherein finding the length of a longest induced path is known as the `Snake-in-the-Box' problem~\cite{AK88, Z97}, and when $G$ is the binomial random graph $G(n,p)$. Let us note here that the length of a longest induced path is at most twice the size of a largest independent set in $G$.

The study of induced cycles in $G(n,p)$ dates back to the late '80s. Frieze and Jackson~\cite{FJ87} showed that for each sufficiently large constant $d$, \textbf{whp}\footnote{With high probability, that is, with probability tending to one as $n$ tends to infinity} the random graph $G(n,d/n)$ contains an induced cycle of length at least $c(d)\,n$ for some constant $c(d)>0$. Łuczak~\cite{L93} and, independently, Suen~\cite{S92} later improved this, proving that whenever $d>1$, \textbf{whp} $G(n,d/n)$ contains an induced cycle of length at least $(1+o(1))\frac{\ln d}{d}\,n$. A simple first‑moment argument implies that \textbf{whp} the length of a longest induced cycle for large $d$ in $G(n,d)$ is at most $(1+o_d(1))\frac{2\ln d}{d}\,n$. This upper bound was shown to be asymptotically tight by Draganić, Glock, and Krivelevich~\cite{DGK22}. Dutta and Subramanian~\cite{DS23} further established a two-point concentration result on the length of a longest induced path in $G(n,p)$ for $p\ge \log^2n/\sqrt{n}$.

It appears natural to extend the study of extremal problems for induced paths and cycles for random to \textit{pseudo-random} graphs. The latter can be informally described as graphs whose edge distribution resembles that of a truly random graph $G(n,p)$ of a similar density. Formally, given a $d$-regular graph $G$ on $n$ vertices, denote the eigenvalues of its adjacency matrix by $d=\lambda_1\ge \lambda_2\ge \cdots \ge \lambda_n$. Letting $\lambda\coloneqq \max\{|\lambda_2|,|\lambda_n|\}$, we then say that $G$ is an $(n,d,\lambda)$-graph. The expander mixing lemma, due to Alon and Chung~\cite{AC88}, relates the \textit{spectral ratio} $\frac{\lambda}{d}$ to the edge-distribution of these graphs (see Lemma~\ref{l: eml}). We refer the reader to~\cite{HLW06,K19,KS06} for comprehensive surveys on the subject of pseudo-random graphs and expanders.  

As noted before, it is thus quite natural to ask whether one can find long induced cycles, similar to the case of the binomial random graph $G(n,d/n)$, in $(n,d,\lambda)$-graphs (naturally under some assumption on the aforementioned spectral ratio). A first result in this direction was recently obtained by Dragani\'c and Keevash~\cite{DK25}, who showed the following: any $(n,d,\lambda)$-graph $G$ with $\lambda<d^{3/4}/100$ and $d<n/10$ contains an induced path of length $\frac{n}{64d}$; their paper did not address the problem of long induced cycles.

Our first main result improves upon the result of Dragani\'c and Keevash~\cite{DK25} by significantly relaxing the assumption on the spectral ratio, as well as showing the existence of a long induced \textit{cycle}, instead of a path.
\begin{theorem}\label{th: large hole}
There exist constants $\delta_1,\delta_2,\delta_3>0$ such that the following holds. For any integers $n, d$ and for any $(n,d,\lambda)$-graph $G$ such that $d\le \delta_3 n$ and $\lambda/d\le \delta_1$, $G$ contains an induced cycle of length at least $\delta_2n/d$.
\end{theorem}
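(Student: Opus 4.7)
The plan is to sparsify $G$ by random vertex sampling, reducing to a bounded-degree graph with good large-scale connectivity, and then to construct a long induced cycle in this subgraph.

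\textbf{Step 1 -- Sampling.} Let $c>0$ be a small absolute constant and set $p=c/d$. Choose $S\subseteq V$ by including each vertex independently with probability $p$. Standard Chernoff bounds and a union bound over $V$ give, with positive probability, that (i)~$|S|=(1+o(1))cn/d=\Theta(n/d)$, and (ii)~every vertex of $V$ has at most $2c$ neighbors in $S$, so in particular $\Delta(G[S])\le 2c$.

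\textbf{Step 2 -- Inherited expansion.} By the expander mixing lemma applied to any two disjoint subsets $A,B\subseteq V$, we have
\[
e_G(A,B)\ge \frac{d|A||B|}{n}-\lambda\sqrt{|A||B|}.
\]
Taking $\delta_1$ small enough, whenever $|A|,|B|\ge \alpha n/d$ for a suitable constant $\alpha>0$ (chosen after $c$), this lower bound is strictly positive. In particular, for any two disjoint subsets of $S$ of size at least $\alpha|S|/c$, there is at least one edge of $G[S]$ between them. This endows $G[S]$ with expansion on the scale of $|S|$. The key point is that any induced cycle in $G[S]$ is automatically induced in $G$, since $S$ is passed to as an induced subgraph.

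\textbf{Step 3 -- Long induced path in $G[S]$.} Construct an induced path in $G[S]$ by a DFS-type procedure: given the current induced path $P=v_1,\ldots,v_\ell$, seek an extension $u\in N_{G[S]}(v_\ell)\setminus\bigl(P\cup N_{G[S]}(\{v_1,\ldots,v_{\ell-1}\})\bigr)$, and backtrack if no such $u$ exists. Because $\Delta(G[S])\le 2c$ is bounded, the forbidden set has size $O(\ell)$; an averaging argument together with the Step 2 expansion shows that the set of endpoints $v_\ell$ admitting no legal extension is only a small fraction of $S$. This yields an induced path of length $\Omega(|S|)=\Omega(n/d)$.

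\textbf{Step 4 -- Closing into a cycle.} Given a maximal induced path $P=v_1,\ldots,v_\ell$ with $\ell=\Omega(n/d)$, observe that if $v_\ell$ has a chord to some $v_j$ then $v_j,v_{j+1},\ldots,v_\ell$ is an induced cycle of length $\ell-j+1$. It therefore suffices to guarantee $v_\ell$ has a chord to a vertex far down the path. If the back-edges from $v_\ell$ reach only vertices near $v_\ell$, we perform a local rerouting of the endpoint (in the spirit of P\'osa rotations, but constrained to preserve inducedness using the bounded degree of $G[S]$) and invoke the large-scale expansion of $G[S]$ to find an endpoint whose back-edge lands near $v_1$.

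\textbf{Main obstacle.} The main difficulty is the closing step. Long induced paths in bounded-degree expanders are reasonably straightforward to produce, but controlling the \emph{location} of a back-edge -- so that the resulting induced cycle is of length $\Omega(n/d)$ rather than bounded -- requires exploiting the expansion scale fixed in Step 2 through a delicate rerouting argument. The usual P\'osa rotation destroys inducedness, so the swap must be local and constrained to avoid introducing chords.
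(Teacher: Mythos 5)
Your high-level strategy --- sparsify by random vertex sampling, grow an induced path by a DFS-type procedure, then close it into a cycle --- is the same as the paper's, but two of your concrete claims fail, and the step you yourself flag as the main obstacle is exactly where the paper's key idea lives. First, in Step 1 the bound $\Delta(G[S])\le 2c$ is not available: the number of neighbours of a fixed vertex in $S$ is $\mathrm{Bin}(d,c/d)$ with constant mean, so the probability that it exceeds $2c$ is a constant, a positive fraction of vertices will violate the bound, and the maximum degree of $G[S]$ is typically of order $\log n/\log\log n$; no union bound rescues a bounded-degree claim. Second, and more seriously, Step 2 is false at the scale you need it. The expander mixing lemma guarantees an edge between disjoint $A,B$ only when $\frac{d}{n}|A||B|>\lambda\sqrt{|A||B|}$, i.e.\ $\sqrt{|A||B|}>\lambda n/d$; since $\lambda\ge(1-o(1))\sqrt{d}$ for every $d$-regular graph, this requires sets of size $\Omega(n/\sqrt{d})$, far larger than the scale $\Theta(n/d)$ at which you invoke it. So $G[S]$ has no mixing-lemma edge-expansion at scale $\Theta(|S|)$, and both your Step 3 averaging argument and your Step 4 plan rest on this non-existent expansion. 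Finally, Step 4 is left unresolved: you correctly note that P\'osa-type rotations destroy inducedness, but you do not supply the replacement argument.

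The paper gets around all three points differently. It works with \emph{vertex} expansion of sets lying inside the random sample: whp every set $S\subseteq V_p$ with $|S|=m=\Theta(n/d)$ satisfies $|N_G(S)|\ge(1-\alpha)\left(dm-\frac{d^2m^2}{2n}\right)=\Theta(n)$ (Lemma~\ref{expanding}), and the DFS analysis is closed not with a maximum-degree bound but with an estimate of the excess of $G_p$ (Lemma~\ref{exc}), which controls how many percolated vertices must be discarded to keep the stack an induced path. For the cycle, instead of rerouting the endpoint, one takes the prefix $P_1$ and suffix $P_2$ of the induced path, blows them up via vertex expansion to linear-sized sets, removes $P$ and the (small) neighbourhood $N_G(P')$ of the middle segment $P'$ to get $N_1,N_2$ of size $\Omega(n)$, and only then applies the mixing lemma --- at linear scale, where it does apply --- to find either a common vertex of $N_1\cap N_2$ or an edge between them; attaching this to the path at the neighbours closest to $P'$ produces an induced cycle of length at least $|P'|=\Omega(n/d)$. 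As written, your proposal has genuine gaps at Steps 1, 2 and 4, and without the vertex-expansion lemma and the prefix/suffix closing argument it does not yield the theorem.
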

A few comments are in place. We note that for $\delta_1$ sufficiently small, one can take $\delta_2=1/150$; in fact, as we will soon see, we can find a longer induced \textit{path}, of length at least $\frac{n}{48d}$, thus improving also the constant factor in the result of~\cite{DK25}. Further, since a random $d$-regular graph on $n$ vertices $G_{n,d}$ is typically an $(n,d,\lambda)$-graph with $\lambda\le 2\sqrt{d}$ (see, e.g.,~\cite{F08}), we obtain that for large enough $d$, \textbf{whp} $G_{n,d}$ contains an induced cycle of length $\Omega(n/d)$. This improves the bound of Frieze and Jackson~\cite{FJ87} from 1985, who showed that \textbf{whp} $G_{n,d}$ contains an induced cycle of length $\Omega(n/d^2)$ (see also~\cite{EFMN21}), and makes progress towards resolving~\cite[Problem 74]{F19}, which asks to determine the typical length of a longest induced cycle in $G_{n,d}$. Finally, let us note that our proof yields a randomised algorithm, which finds in a linear in $n$ time an induced cycle of length at least $\delta_2n/d$ \textbf{whp}.

It turns out, perhaps somewhat surprisingly, that insights and results from the setting of \textit{site percolation} prove very efficient, leading to a rather simple (in hindsight) proof of Theorem \ref{th: large hole}. Intuitively, finding an induced cycle is 'easier' when the graph is sparse, and in this case, choosing a random set of vertices typically yields a sparser graph, wherein every induced structure is also an induced structure in the host graph. Formally, given a host graph $G=(V,E)$, form a random subset $V_p$ by retaining every $v\in V$ independently with probability $p$. The $p$-site-percolated\footnote{In many papers, this notation is reserved for the \textit{bond}-percolated random subgraph. Throughout this paper, we reserve $G_p$ for the $p$-\textit{site}-percolated random subgraph.} subgraph $G_p$ is then $G_p\coloneqq G[V_p]$. We will derive Theorem \ref{th: large hole} from our next result.
\begin{theorem}\label{th: percolated path}
Let $\epsilon>0$ be a sufficiently small constant. Let $n,d\coloneqq d(n)\in\mathbb{N}$ be such that $d=o(n)$. Let $p=\frac{1+\epsilon}{d}$. Then, there exists a constant $\delta\coloneqq \delta(\epsilon)>0$ such that the following holds. Let $G$ be an $(n,d,\lambda)$-graph with $\frac{\lambda}{d}\le\delta$. Then, \textbf{whp} $G_p$ contains an induced path of length at least $\frac{\epsilon^2n}{3d}$.
\end{theorem}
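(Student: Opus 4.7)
The plan is to establish the result via a depth-first search (DFS) on $G_p$ that enforces an induced-path invariant on the stack, and to show that at a carefully chosen moment during this exploration the stack has the claimed length. I would couple the randomness of $V_p$ with the DFS by revealing each vertex's $\mathrm{Ber}(p)$ coin the first time it is queried as a potential child. The DFS maintains three sets---the stack $S$ (always an induced path in $G$), the finished set $T$, and the unexplored set $U=V_p\setminus(S\cup T)$---and proceeds as follows. From the top $v\in S$, it looks for a candidate $u\in V_p\cap N_G(v)$ with $u\notin S\cup T$ and $u\notin N_G(w)$ for every other $w\in S$; if such $u$ exists, push it onto $S$, and otherwise pop $v$ into $T$. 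This keeps $S$ an induced path in $G$ throughout, so it suffices to bound the maximum stack size from below.

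Next, I would pick a stopping moment $t^*$, for example the first time $|S\cup T|$ reaches roughly $(1+\epsilon/2)n/d$ (approximately half of $V_p$, whose size is concentrated around $(1+\epsilon)n/d$), and prove that the maximum stack size attained up to $t^*$ is at least $\epsilon^2 n/(3d)$. The proof is by contradiction: if $|S|$ stayed below $\epsilon^2 n/(3d)$ throughout, then $T$ would contain $\Theta(n/d)$ vertices, and for each $v\in T$ every $V_p$-neighbor of $v$ lies either in $S\cup T$ (already pushed) or in $U$ (blocked at the time of $v$'s pop by a $G$-edge to some other stack vertex). Counting $\sum_{v\in T}|N_G(v)\cap V_p|$ using Chernoff on the $\mathrm{Bin}(d,p)\approx 1+\epsilon$ concentration of $V_p$-degrees, and controlling blocked candidates via the expander mixing lemma, should yield a contradiction. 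Choosing $\delta=\delta(\epsilon)$ small enough in terms of $\epsilon$ makes the pseudo-random slack in the EML negligible compared to the principal term $d|T||U|/n$.

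The main obstacle is controlling the blocked candidates---those $u\in V_p\cap N_G(v)$ that happen to be $G$-adjacent to some vertex of $S\setminus\{v\}$. A naive spectral bound gives at most $d^2/n+\lambda^2$ common neighbors per pair of stack vertices, which summed over the stack and multiplied by $p$ produces a ``blocked'' contribution of order $(1+\epsilon)\delta^2\epsilon^2 n$; for constant $\delta$ and growing $n$ this swamps the main term. I would circumvent this by a sprinkling decomposition $p=p_1+p_2-p_1p_2$ with $p_1$ close to $p$: use $V_{p_1}$ to drive the DFS and the extra independent randomness of $V_{p_2}$ to re-randomize candidates that are blocked in the first phase, together with a Doob-martingale concentration along the DFS process. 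The goal is to show that on average only an $O(\epsilon^2)$-fraction of candidates are blocked per step, so the induced-DFS proceeds at essentially the same rate as a standard DFS on $G_p\sim G(m,c/m)$ with $m=|V_p|$ and $c=1+\epsilon$, delivering the stack of length $\epsilon^2n/(3d)$.
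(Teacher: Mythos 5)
Your skeleton---an induced-path DFS whose Bernoulli coins are revealed on first query, followed by a contradiction at a suitably chosen stopping time---is essentially the paper's skeleton, but the two places you flag as the crux are exactly where the proposal has genuine gaps, and the fixes you sketch would not work. First, the blocked candidates. Whether a candidate $u$ of the top vertex $v$ is blocked is determined by $G$ and the current stack ($u$ is blocked iff $u\in N_G(S\setminus\{v\})$), not by the percolation coins; hence sprinkling a reserve $V_{p_2}$ cannot ``re-randomize'' blocking, and a per-step $O(\epsilon^2)$ bound on the blocked fraction is false in general: with $\lambda/d$ only a small constant, a single vertex's entire neighbourhood may lie inside $N_G(S)$ (the lexicographic-product graphs of Section~\ref{sec: construction} show this kind of local behaviour is compatible with the hypotheses), and every spectral per-step estimate carries an error term of the form $\lambda\sqrt{\cdot}$, which is useless at stack size $\Theta(n/d)$, as you yourself computed. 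What is true is only an aggregate bound, and the paper obtains it structurally rather than via the DFS process: blocked vertices are discarded into a set $S_2$, each such vertex sends at least two edges into the explored part of its component, so $|S_2|\le\mathrm{exc}(G_p)$, and the excess of $G_p$ is shown (Lemma~\ref{exc}, relying on the site-percolation results quoted as Theorems~\ref{DK1}--\ref{DK3}) to be $O(\epsilon^3)n/d$ \textbf{whp}. This global excess accounting is the missing idea; no Doob-martingale or sprinkling argument along the lines you describe is available, because there is no per-step expectation to concentrate.

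Second, the closing count. Chernoff on $V_p$-degrees plus the expander mixing lemma at your stopping time (about half of $V_p$ processed) does not produce a contradiction: for sets of size $\Theta(n/d)$ the EML error $\lambda\sqrt{|A||B|}$ is of order $\delta\sqrt{d}\,n\gg\epsilon n$ when $\delta$ is a fixed constant, and even with zero blocking the degree/edge counts at that time are mutually consistent, since most popped vertices lie in small tree components whose $V_p$-neighbours are already popped. The paper instead stops after $\epsilon n$ queried vertices and invokes a percolation-specific vertex-expansion statement (Lemma~\ref{expanding}): \textbf{whp} every $S\subseteq V_p$ with $\alpha n/d\le|S|\le n/(3d)$ satisfies $|N_G(S)|\ge(1-\alpha)\bigl(d|S|-d^2|S|^2/(2n)\bigr)$. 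Applied to the popped set $S_1$, all of whose $G$-neighbours lie among the at most $\epsilon n$ processed vertices, this gives the contradiction. So to make your plan rigorous you would need precisely these two inputs---the excess bound for $G_p$ and the expansion of subsets of $V_p$---neither of which follows from the expander mixing lemma, Chernoff bounds, or sprinkling.
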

Note that Theorem \ref{th: percolated path} naturally implies that \textit{deterministically} the whole graph $G$ contains a long induced path. In fact, with a little more effort, one can typically find an induced cycle of length $\Omega(n/d)$ in $G_p$, see details in Remark \ref{remark: cycle in Gp}. Further, as we mentioned after Theorem \ref{th: large hole}, it is not hard to verify that one can take any $\epsilon\le1/4$, and thus obtain an induced path of length $\frac{n}{48d}$.

Since the edge-distribution of an $(n,d,\lambda)$-graph $G$ (when $\lambda/d\le \delta$ for some sufficiently small constant $\delta$) resembles that of the binomial random graph $G(n,d/n)$, one might expecto to find an induced path (or even cycle) of length $\Theta(\frac{\ln d}{d})n$ in $G$. Perhaps somewhat surprisingly, as our next result shows, this is not the case.
\begin{theorem}\label{th: construction}
For every constant $\delta>0$, there exists a constant $C\coloneqq C(\delta)>0$ such that the following holds. For every sufficiently large $d\in \mathbb{N}$, there exist infinitely many $n$ for which there exists an $(n,d,\lambda)$-graph $G$ with $\frac{\lambda}{d}\leq\delta$, whose longest induced path is of length at most $\frac{Cn}{d}$.
\end{theorem}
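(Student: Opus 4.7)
My plan is to construct $G$ as a small perturbation of a clique blow-up of a low-degree pseudo-random base graph; the clique structure will force induced paths to be short, while the perturbation corrects the degree to exactly $d$. Given $\delta>0$, fix a constant $d_0=d_0(\delta)$ of order $\Theta(1/\delta^2)$, large enough that $(2\sqrt{d_0-1}+1)/d_0\le\delta/2$, and let $H$ be an $(m,d_0,\lambda_0)$-graph with $\lambda_0\le 2\sqrt{d_0-1}$ (this exists for infinitely many $m$, e.g., by Friedman's theorem for random $d_0$-regular graphs). For each sufficiently large $d$ set
\[
k:=\Bigl\lfloor\tfrac{d+1}{d_0+1}\Bigr\rfloor\qquad\text{and}\qquad r:=d-(k(d_0+1)-1)\in\{0,1,\ldots,d_0\}.
\]

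I would then take the lexicographic product $G_0:=H[K_k]$, in which every vertex of $H$ is replaced by a clique $K_k$ and two distinct cliques are joined by a complete bipartite graph iff the corresponding vertices of $H$ are adjacent; this $G_0$ is $(k(d_0+1)-1)$-regular on $n:=mk$ vertices. To attain exact $d$-regularity I add an $r$-regular graph $R$ on $V(G_0)$ that is edge-disjoint from $G_0$. Such an $R$ exists by a standard density argument: the complement $\overline{G_0}$ has minimum degree much larger than $n/2$, and iterated extraction of $2$-factors (plus a perfect matching if $r$ is odd, which requires $n$ even, a parity condition satisfied for infinitely many $m$) yields an $r$-regular subgraph. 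Set $G:=G_0\cup R$, which is $d$-regular on $n$ vertices.

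A direct diagonalisation of $A_{G_0}=A_H\otimes J_k+I_m\otimes(J_k-I_k)$ shows its eigenvalues are $\{k\mu_i+(k-1):i=1,\ldots,m\}$ together with $-1$ of multiplicity $m(k-1)$, where the $\mu_i$ are the eigenvalues of $A_H$; hence $\lambda(G_0)\le k(\lambda_0+1)-1$. By the triangle inequality for the spectral norm on $\mathbf{1}^\perp$, $\lambda(G)\le\lambda(G_0)+\lambda(R)\le k(\lambda_0+1)+r$, and dividing by $d\ge k d_0$ gives $\lambda(G)/d\le(\lambda_0+1)/d_0+1/k\le\delta/2+o(1)\le\delta$ for $d$ large. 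For the induced-path bound, the vertex set of $G$ is partitioned into $m$ cliques (the cells of the blow-up), which remain cliques after adding $R$ (since $R$ puts no edges within a cell). Three vertices from a single cell would induce a triangle in $G$, which cannot appear in an induced path, so every induced path uses at most two vertices per cell, giving length at most $2m=2n/k\le 4(d_0+1)n/d$. This establishes the theorem with $C=4(d_0+1)=O(1/\delta^2)$, and infinitely many $n$ arise by letting $m$ range.

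The main technical obstacle is attaining exact $d$-regularity for every large $d$: a pure clique blow-up only realises degrees of the form $k(d_0+1)-1$, so for generic $d$ (in particular when $d+1$ is prime) blow-ups alone are insufficient. The augmentation $R$ resolves this at negligible cost to the spectrum --- it adds at most $r\le d_0=O(1/\delta^2)$ to $\lambda$ --- and, crucially, preserves the clique-cover structure controlling induced paths, since we require $R$ to use only cross-cell edges.
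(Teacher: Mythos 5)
Your proposal is correct and follows essentially the same route as the paper: a clique blow-up (lexicographic product with $K_k$) of a constant-degree pseudo-random graph $H$, with the spectral ratio controlled by the blow-up spectrum and the induced-path bound coming from the fact that each clique cell can contribute at most two vertices to an induced path. The only real difference is the divisibility fix: the paper adjusts the degree by taking the inner graph to be $K_k$ with an $r$-regular graph removed (staying entirely within the lexicographic-product spectral computation), whereas you add a sparse cross-cell $r$-regular graph $R$ and bound its spectral effect by the triangle inequality on $\mathbf{1}^\perp$; both work, and your variant has the small advantage of keeping the cells exact cliques so the two-vertices-per-cell argument is untouched (the only cosmetic blemish is that Friedman's theorem gives $\lambda_0\le 2\sqrt{d_0-1}+\epsilon$ rather than $2\sqrt{d_0-1}$, which is immaterial here).
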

In particular, Theorem~\ref{th: construction} shows that Theorem~\ref{th: large hole} is tight up to a multiplicative constant. Let us note that this also marks a key difference between a longest path in an $(n,d,\lambda)$-graph, which is of length linear in $n$~\cite{KS13}, and a longest induced path, which we now see might be of length at most linear in $n/d$. 

Given an $(n,d,\lambda)$-graph $G$ with $\lambda/d$ being a (small) constant, by Theorems \ref{th: large hole} and \ref{th: construction} we know that a largest hole of $G$ is of size $\Omega(n/d)$, and this estimate is tight. It would be interesting to relate the spectral ratio $\lambda/d$ (where we allow the ratio to depend on $d$) to the size of a largest hole of an $(n,d,\lambda)$-graph $G$. In particular, one may wonder whether a largest hole is of size $\Omega(n\log (d/\lambda)/d)$ --- indeed, when $\lambda\ll d$, the independence number of an $(n,d,\lambda)$-graph $G$ satisfies $\alpha(G)=\Omega(n\log(d/\lambda)/d)$ (see, for example, \cite[Proposition 4.6]{KS06}).

\paragraph{Application}
Theorem \ref{th: percolated path} and its proof bear interesting consequences in terms of counting non-isomorphic induced subgraphs. Let $\mu(G)$ be the number of non-isomorphic induced subgraphs in $G$. Erd\H{o}s and R\'enyi conjectured that for every constant $c_1>0$, there exists a constant $c_2>0$ such that if $G$ has no subset $S$ of $c_1\log n$ vertices on which $G[S]$ is either the complete graph or the empty graph (that is, $G$ is $c_1$-Ramsey), then $\mu(G)\ge \exp\{c_2n\}$. Several results in this direction were obtained~\cite{AB89,AH91}; in particular, in 1976  M\"uller~\cite{M79} showed that \textbf{whp} $\mu(G(n,1/2))=2^{(1-o(1))n}$. The conjecture was finally confirmed by Shelah in 1998~\cite{S92}. 

In a recent work~\cite{KZ25}, the second and fourth authors determined the asymptotics of $\mu(G(n,p))$ for (almost) the entire range of $G(n,p)$. They further showed that \textbf{whp} the number of non-isomorphic induced subgraphs in a random $d$-regular graph $G_{n,d}$ is exponential in $n$ as well, and the base of the exponent grows to $2$ with growing $d$. Utilising Theorem \ref{th: percolated path}, we are able to show that for pseudo-random graphs, $\mu(G)$ is exponential in $(n\log d)/d$.
\begin{theorem}\label{th: isomorphism lower bound}
There exist constants $\delta_1,\delta_2,\delta_3>0$ such that the following holds. For any integers $n, d$ and for any $(n,d,\lambda)$-graph $G$ such that $d\le \delta_3 n$ and $\lambda/d\le \delta_1$, $\mu(G)\ge \exp\left(\frac{\delta_2n\log d}{d}\right)$.
\end{theorem}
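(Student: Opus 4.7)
The plan is to combine Theorem~\ref{th: percolated path} with a caterpillar-counting argument: first use the percolated-path theorem to find a long induced path $P$ in $G$, then construct exponentially many non-isomorphic caterpillars by attaching varying numbers of pendants at the spine vertices.

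First, by Theorem~\ref{th: percolated path} (applied with a suitable constant $\epsilon$, and noting that since the asserted event holds with positive probability, the path exists in $G$ deterministically) there is an induced path $P=v_0v_1\cdots v_\ell$ in $G$ of length $\ell\geq c_1 n/d$, for some constant $c_1=c_1(\delta_1)>0$.

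Next, I would establish the existence of a subset $I\subseteq\{0,1,\ldots,\ell\}$ containing both endpoints, with $|I|\geq c_2\ell$, and for each $i\in I$ a \emph{private independent pendant set} $A_i\subseteq N(v_i)\setminus V(P)$ with $|A_i|\geq K$, where $K=d^{c_3}$ for some constant $c_3>0$, such that: (i) every $u\in A_i$ is adjacent in $G$ to $v_i$ and to no other vertex of $V(P)$; (ii) the $A_i$'s are pairwise disjoint and $\bigcup_{i\in I}A_i$ is an independent set in $G$. To prove this, I would use the expander mixing lemma to bound, for typical spine vertices, the number of neighbors of $v_i$ shared with other vertices of $V(P)$ and the number of edges inside $N(v_i)\setminus V(P)$; a greedy/random deletion argument together with (if needed) a random partitioning step then produces the required $A_i$ for a positive fraction of the spine vertices.

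Given these pendant sets, for every sequence $\vec k=(k_i)_{i\in I}$ with $k_i\in\{0,1,\ldots,K\}$ and $k_0,k_\ell\geq 1$, I would pick $B_i\subseteq A_i$ with $|B_i|=k_i$ and set $S_{\vec k}=V(P)\cup\bigcup_{i\in I}B_i$. By conditions (i)--(ii), $G[S_{\vec k}]$ is a caterpillar whose spine is precisely $V(P)$ (the endpoints remain in the spine since $k_0,k_\ell\geq 1$), with exactly $k_i$ pendant leaves at $v_i$ for $i\in I$ and none at $v_i$ for $i\notin I$. The isomorphism class of a caterpillar is determined by its spine sequence of pendant counts up to reversal, so different sequences $\vec k$ (modulo spine reflection) yield distinct isomorphism classes of induced subgraphs of $G$. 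This gives
\[
\mu(G)\;\geq\;\tfrac{1}{2}\,K^2(K+1)^{|I|-2}\;\geq\;\exp\!\left(\frac{\delta_2 n\log d}{d}\right)
\]
for a suitable $\delta_2>0$.

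The main obstacle is the pendant-set construction: extracting $A_i$ of size $d^{\Omega(1)}$ that are simultaneously private, pairwise disjoint, and jointly independent. It is precisely here that the spectral assumption $\lambda/d\leq\delta_1$ is used in an essential way, via the expander mixing lemma to control codegrees between spine vertices and edge densities inside their neighborhoods.
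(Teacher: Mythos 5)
Your opening step (extracting a long induced path from Theorem~\ref{th: percolated path}) matches the paper, but the pendant-set construction at the heart of your argument has a genuine gap — in fact two. First, the structure you ask for need not exist in the graphs covered by the theorem: you want, at $\Omega(n/d)$ spine vertices, private sets $A_i\subseteq N(v_i)$ of size $K=d^{c_3}$ whose union is independent. Take the paper's own extremal construction from Theorem~\ref{th: construction}, $G=\mathrm{lex}(H,K_k)$ with $k=\Theta(d)$ and $\lambda/d\le 4/\sqrt{d_0}\le\delta_1$: there every neighbourhood $N(v)$ is a union of $d_0+1=O_{\delta_1}(1)$ cliques, so every independent subset of $N(v)$ has constant size, and no $A_i$ of size $d^{c_3}$ can be independent. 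This is consistent with what the expander mixing lemma can give when one only assumes $\lambda\le\delta_1 d$: it bounds $e(N(v))$ by roughly $\frac{d^3}{2n}+\frac{\delta_1 d^2}{2}$, i.e.\ average degree about $\delta_1 d$ inside $N(v)$, so Tur\'an yields only $O(1/\delta_1)$ independent vertices, nowhere near $d^{c_3}$. Second, the privacy step cannot be run through the mixing lemma either: for $A=N(v_i)$ and $B=V(P)$ the error term $\lambda\sqrt{|A||B|}=\Theta(\delta_1 d\sqrt{n})$ dwarfs the main term $\Theta(d)$, and individual codegrees are completely uncontrolled when $\lambda$ is comparable to $d$. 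The paper obtains privacy not from the mixing lemma but from the percolation randomness that you discard immediately after finding $P$: it intersects the whp events that $P\subseteq V_p$ exists and that (Lemma~\ref{expanding}) every such subset of $V_p$ satisfies $|N_G(P)|\ge(1-\alpha)\left(d|P|-\frac{d^2|P|^2}{2n}\right)$, and then the trivial degree-sum count $\sum_{v\in N_G(P)}d(v,P)\le d|P|$ forces all but $O(\epsilon^4 n)$ neighbours of $P$ to have exactly one neighbour on $P$.

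It is also instructive to see how the paper sidesteps the independence obstacle entirely: it never requires the attached vertices to be independent. It selects $\Omega(n/d)$ spine vertices $W'$, pairwise at distance at least $4$ along $P$ and avoiding the endpoints, each having at least $d/10$ private neighbours, and counts the induced subgraphs $G[V(P)\cup A]$ over arbitrary subsets $A$ of these private neighbours (such subgraphs are in general not caterpillars, since $A$ may span edges). A rigidity argument using only the degrees of path vertices and the distance-$4$ spacing shows that any isomorphism between two such subgraphs restricts on $V(P)$ to one of the two automorphisms of the path, so up to reversal the subgraphs are distinguished by the attachment-count tuple $\left(|A\cap N(v)|\right)_{v\in W'}$, giving $\mu(G)\ge\frac{1}{2}(d/10)^{\Omega(n/d)}$. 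To rescue your plan you would have to both retain the percolation expansion (rather than the mixing lemma) and drop the independence requirement — at which point the "pendant counts determine the caterpillar" step no longer applies and you are led back to an isomorphism-rigidity argument of the paper's type.
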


We further show that the above is tight up to the constant in the exponent.
\begin{theorem}\label{th: isomorphism upper bound}
For every constant $\delta>0$, there exists a constant $C>0$ such that the following holds. For every sufficiently large $d\in \mathbb{N}$, there exist infinitely many $n$ for which there exists an $(n,d,\lambda)$-graph $G$ with $\lambda/d\le \delta$ satisfying that $\mu(G)\le \exp\left\{\frac{Cn\log d}{d}\right\}$.
\end{theorem}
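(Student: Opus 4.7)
The plan is to construct, for each $\delta > 0$ and every sufficiently large $d$, a pseudo-random graph $G$ with very high symmetry, so that the number of isomorphism classes of induced subgraphs is tightly constrained. The natural candidate, which also yields Theorem~\ref{th: construction}, is a \emph{lexicographic product} $G := F[K_s]$, where $F$ is a small pseudo-random $c$-regular graph and $K_s$ is the clique on $s$ vertices. Fix $c = c(\delta)$ a sufficiently large constant and set $s := (d+1)/(c+1)$, so that $d = (c+1)s - 1$; take $F$ to be any $(k, c, \lambda_F)$-graph with $\lambda_F/c \le \delta/2$, which exists whenever $k$ is sufficiently large (for instance, \textbf{whp} a random $c$-regular graph works). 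Setting $n := ks$ and letting $k$ vary gives infinitely many admissible $n$. For values of $d$ with $(c+1) \nmid (d+1)$, a small ad hoc adjustment (the same as in Theorem~\ref{th: construction}) absorbs the residue without changing the bounds below.

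First I would verify that $G$ is an $(n,d,\lambda)$-graph with $\lambda/d \le \delta$: the degree is $(s-1) + cs = d$, and a standard spectral analysis of the lex product shows that the eigenvalues of $G$ are $\{s\mu_i(F) + (s-1) : 1 \le i \le k\}$ together with $-1$ of multiplicity $k(s-1)$. Hence $\lambda(G) = s\lambda_F + s - 1$, so
\[\frac{\lambda(G)}{d} \;\le\; \frac{s(\delta c/2) + s}{(c+1)s - 1} \;\le\; \frac{\delta}{2} + \frac{1}{c+1} \;\le\; \delta\]
provided $c$ is chosen sufficiently large in terms of $\delta$.

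Then I would bound $\mu(G)$ by exploiting the large automorphism group $\mathrm{Aut}(G) \supseteq \mathrm{Aut}(F) \ltimes S_s^{V(F)}$, which arises because the vertices within each fibre $\{i\} \times [s]$ are freely interchangeable (the within-block $K_s$ structure and the between-block complete-bipartite-or-empty structure are both block-symmetric). For any $S \subseteq V(F) \times [s]$, set $T_i := \{a \in [s] : (i, a) \in S\}$; the $S_s^{V(F)}$ part of the automorphism group maps each $T_i$ independently to an arbitrary subset of $[s]$ of the same size. Hence the $\mathrm{Aut}(G)$-orbit of $S$ is determined, up to a subsequent $\mathrm{Aut}(F)$-permutation of the fibres, by the function $f : V(F) \to \{0, 1, \ldots, s\}$ with $f(i) := |T_i|$. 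Since subsets lying in a common orbit yield isomorphic induced subgraphs, $\mu(G) \le (s+1)^k$, and substituting $k = n/s$ together with $s \ge d/(c+1)$ yields $\log \mu(G) \le 2(c+1)\, n \log d / d$, proving the theorem with $C := 2(c+1)$. The main delicate point is the divisibility condition on $d$, handled via the same perturbation used for Theorem~\ref{th: construction}; once this is granted, the bound on $\mu$ is a clean consequence of the symmetry of the lex product.
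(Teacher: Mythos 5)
Your proposal is correct and takes essentially the same route as the paper: both build a lexicographic product of a small pseudo-random graph with a highly symmetric inner factor on $s=\Theta(d)$ vertices, and bound $\mu(G)$ by the number of ``size profiles'' $\bigl(|S\cap V_i|\bigr)_{i}\in\{0,\dots,s\}^k$, giving $\mu(G)\le(s+1)^k$. The only (cosmetic) difference is that you take the inner graph to be $K_s$ while the paper uses the empty graph $\bar{K}_{d/d_0}$ --- either works here, since both have $S_s$ as automorphism group; one small slip worth noting is that $\frac{s}{(c+1)s-1}\le\frac{1}{c+1}$ is false, but the correct bound $\frac{s}{(c+1)s-1}\le\frac{1}{c}$ still gives $\lambda/d\le\delta/2+1/c\le\delta$ for $c\ge 2/\delta$.
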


\paragraph{Structure of the paper}
In Section \ref{sec: prelim} we present notation used throughout the paper, describe an adaptation of the Depth-First-Search algorithm which we will employ, and collect several lemmas to be utilised in the proof. Then, in Section \ref{sec: proof} we prove Theorem \ref{th: percolated path} and derive Theorem \ref{th: large hole} from it. In Section \ref{sec: isomorphism} we prove Theorem \ref{th: isomorphism lower bound}. Finally, in Section \ref{sec: construction} we give constructions proving Theorems \ref{th: construction} and \ref{th: isomorphism upper bound}.

\section{Preliminaries}\label{sec: prelim}
Given a graph $G=(V,E)$ and sets $A,B\subseteq V(G)$, we denote by $e(A,B)$ the number of edges with one endpoint in $A$ and the other endpoint in $B$. We further abbreviate $e(A)\coloneqq e(A,A)/2=e(G[A])$. We denote by $N(A)$ the external neighbourhood of $A$, that is, $$N(A)=\{v\in V\setminus A\colon \exists u\in A, uv\in E\}.$$ 
Given $v\in V$ and $A\subseteq V$, we denote by $d(v,A)$ the number of neighbours of $v$ in $A$. When $A=V$, we write $d(v)\coloneqq d(v,V)$ for the degree of $v$ in $G$.
Recall that given $G=(V,E)$, we form $V_p$ by retaining every $v\in V$ independently and with probability $p$; we then abbreviate $G_p=G[V_p]$. Throughout the paper, we systematically ignore rounding signs as long as it does not affect the validity of our arguments.

We will make use of the following fairly standard Chernoff-type probability bound (see, for example, Appendix A in~\cite{zbMATH06566409}).
\begin{lemma}\label{lemma:binomial-bounds}
Let $X\sim Bin(n,p)$. Then, for any $0\le t \le np$,
    \[
        \mathbb{P}\left(\left|X- \mathbb{E}[X]\right|\ge t\right)\le 2\exp\left\{-\frac{t^2}{3np}\right\}.
    \]
\end{lemma}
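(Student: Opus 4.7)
The plan is to use the classical Chernoff method via exponential moments and Markov's inequality, handling the upper and lower tails separately and combining via a union bound.

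First I would write $X=\sum_{i=1}^n X_i$ with $X_i\sim\mathrm{Bernoulli}(p)$ independent, so that the moment generating function factorises: $\mathbb{E}[e^{\lambda X}]=\prod_i\mathbb{E}[e^{\lambda X_i}]=(1+p(e^\lambda-1))^n\le \exp\bigl(np(e^\lambda-1)\bigr)$, using $1+x\le e^x$. For the upper tail, Markov applied to $e^{\lambda X}$ gives, for every $\lambda>0$,
\[
\mathbb{P}(X\ge np+t)\le \exp\bigl(np(e^\lambda-1)-\lambda(np+t)\bigr).
\]
Optimising over $\lambda$ (take $\lambda=\ln(1+t/(np))$) yields the standard relative-entropy bound $\exp(-np\,\varphi(t/(np)))$ with $\varphi(x)=(1+x)\ln(1+x)-x$. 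For the lower tail, repeating the argument with $-\lambda$ and the analogous optimisation gives $\mathbb{P}(X\le np-t)\le \exp(-np\,\psi(t/(np)))$ where $\psi(x)=(1-x)\ln(1-x)+x$.

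Next I would reduce these two expressions to the uniform bound $\exp(-t^2/(3np))$ in the stated range $0\le t\le np$, i.e., for $\delta:=t/(np)\in[0,1]$. A short calculus check shows $\varphi(\delta)\ge \delta^2/3$ on $[0,1]$ (compare the Taylor expansions, or equivalently verify $3\varphi(\delta)-\delta^2\ge 0$ by differentiating twice and using $\varphi(0)=\varphi'(0)=0$), and $\psi(\delta)\ge \delta^2/2\ge \delta^2/3$ on the same range. Substituting back $\delta=t/(np)$ gives that each tail is bounded by $\exp(-t^2/(3np))$.

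Finally, a union bound over the two tails yields the factor of $2$ in front and produces the claimed inequality. The only remotely delicate step is the pointwise inequalities $\varphi(\delta)\ge\delta^2/3$ and $\psi(\delta)\ge\delta^2/2$ on $[0,1]$, which are routine calculus exercises; everything else is the standard exponential-moment computation. Since the lemma is stated merely as a convenient packaging of the Chernoff bound (and is indeed cited to an appendix in \cite{zbMATH06566409}), a short proof along the lines above, or a direct citation of that appendix, suffices.
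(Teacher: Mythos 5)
Your argument is correct: the exponential-moment (Chernoff) computation, the relative-entropy bounds with $\varphi$ and $\psi$, the elementary inequalities $\varphi(\delta)\ge\delta^2/3$ and $\psi(\delta)\ge\delta^2/2$ on $[0,1]$, and the final union bound all check out. The paper itself gives no proof and simply cites the standard reference, whose argument is exactly the one you reproduce, so your writeup is a faithful (and valid) filling-in of that citation.
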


Throughout the rest of the section, we set $\epsilon>0$ to be a sufficiently small constant. We assume that $G$ is an $(n,d,\lambda)$-graph (in fact, we consider a sequence of pairs $(d_k,n_k)\in \mathbb{N}^2$ and a sequence of $(n_k,d_k,\lambda_k)$-graphs $(G_k)_{k\in\mathbb{N}}$ satisfying $d_k=o(n_k)$). We write $\delta=\frac{\lambda}{d}$ for the spectral ratio. We further set $p=\frac{1+\epsilon}{d}$, and let $V_p$ and $G_p$ be as defined above.

Let us first collect several lemmas that will be useful for us throughout the paper. The first is the aforementioned expander mixing lemma, due to Alon and Chung~\cite{AC88}.
\begin{lemma}\label{l: eml}
    For any pair of subsets $A,B\subseteq V(G)$,
    $$\left|e(A,B)-\frac{d}{n}|A||B|\right|\leq \lambda\sqrt{|A||B|}.$$
\end{lemma}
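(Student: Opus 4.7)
The plan is to prove Lemma~\ref{l: eml} by standard spectral means: express $e(A,B)$ as a bilinear form on indicator vectors via the adjacency matrix $M$ of $G$, then split off the contribution of the top eigenvector (which produces the main term $\frac{d}{n}|A||B|$), and finally bound the remaining contributions uniformly by $\lambda$ using Cauchy--Schwarz.

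First, since $G$ is $d$-regular, $\mathbf{1}$ is an eigenvector of $M$ with eigenvalue $d=\lambda_1$, the largest. Fix an orthonormal eigenbasis $v_1=\mathbf{1}/\sqrt{n},v_2,\ldots,v_n$ with eigenvalues $d=\lambda_1\ge\lambda_2\ge\cdots\ge\lambda_n$, so that $|\lambda_i|\le\lambda$ for every $i\ge 2$ by definition of $\lambda$. Let $\chi_A,\chi_B\in\{0,1\}^V$ be the indicator vectors of $A$ and $B$. A short check (consistent with the paper's convention that $e(A)=e(A,A)/2$, i.e.\ internal edges are counted twice in $e(A,A)$) gives $\chi_A^{\!\top}M\chi_B=e(A,B)$. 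Expanding $\chi_A=\sum_i\alpha_i v_i$ and $\chi_B=\sum_i\beta_i v_i$, Parseval yields $\sum_i\alpha_i^2=\|\chi_A\|^2=|A|$ and $\sum_i\beta_i^2=|B|$, while projecting onto $v_1$ gives $\alpha_1=|A|/\sqrt{n}$ and $\beta_1=|B|/\sqrt{n}$.

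Substituting into $\chi_A^{\!\top}M\chi_B=\sum_i\lambda_i\alpha_i\beta_i$ and separating the $i=1$ term yields
$$e(A,B)=d\cdot\frac{|A||B|}{n}+\sum_{i\ge 2}\lambda_i\alpha_i\beta_i.$$
The error term is then controlled by
$$\Bigl|\sum_{i\ge 2}\lambda_i\alpha_i\beta_i\Bigr|\le\lambda\sum_{i\ge 2}|\alpha_i\beta_i|\le\lambda\sqrt{\sum_{i\ge 2}\alpha_i^2}\sqrt{\sum_{i\ge 2}\beta_i^2}\le\lambda\sqrt{|A||B|},$$
where the middle step is Cauchy--Schwarz. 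This gives exactly the claimed bound. There is no real obstacle here; the only mildly subtle point is making sure the bilinear identity $\chi_A^{\!\top}M\chi_B=e(A,B)$ is compatible with how overlapping edges in $A\cap B$ are counted, which matches the convention already fixed in the preliminaries.
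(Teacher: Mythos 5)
The paper states this lemma as a cited result (Alon--Chung) and does not include a proof, so there is nothing internal to compare against. Your argument is the standard spectral proof of the expander mixing lemma --- projecting the indicator vectors onto the all-ones eigendirection to extract the main term $\tfrac{d}{n}|A||B|$ and bounding the remainder via Cauchy--Schwarz using $|\lambda_i|\le\lambda$ for $i\ge 2$ --- and it is correct, including the careful note that the bilinear identity $\chi_A^{\top}M\chi_B=e(A,B)$ matches the paper's convention $e(A,A)=2e(G[A])$.
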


We also require certain results on site percolation on pseudo-random graphs. The first result relates the spectral ratio of $G$ to the vertex-expansion (in $G$) of sets which lie in $G_p$.
\begin{lemma}[Lemma 2.4 in~\cite{zbMATH07751060}, see also~\cite{zbMATH06534995}] \label{expanding}
Let $\alpha\coloneqq\alpha(\epsilon)\in(0,\epsilon^{8})$ be a constant. Suppose that $\delta\le \alpha^{2/\alpha}$. Then, \textbf{whp} $V_p$ does not contain a set $S$ with $|S| = m$, $\frac{\alpha n}{d}\leq m\leq \frac{n}{3d}$, such that $|N_G(S)| < (1-\alpha)\left(dm-\frac{d^2m^2}{2n}\right)$.
\end{lemma}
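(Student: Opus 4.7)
My plan is to apply a union bound over potential ``bad'' sets. For each $m\in[\alpha n/d,\,n/(3d)]$, call an $m$-subset $S\subseteq V$ \emph{bad} if $|N_G(S)|<K_m:=(1-\alpha)(dm-d^2m^2/(2n))$. Since $\Pr[S\subseteq V_p]=p^m$,
$$\mathbb{E}\bigl[\#\{\text{bad }S\subseteq V_p:|S|=m\}\bigr]\ \le\ \#\{\text{bad }S\subseteq V:|S|=m\}\cdot p^m;$$
summing over $m$ and applying Markov's inequality would give the claim, so the task reduces to enumerating bad sets in $G$ sufficiently tightly.

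The key structural observation is that for any bad $S$ the set $B:=V\setminus(S\cup N_G(S))$ satisfies $e(S,B)=0$, and Lemma~\ref{l: eml} then forces $|S|\cdot|B|\le\delta^2n^2$, i.e.\ $|B|\le\delta^2n^2/m$. Moreover, $|B|\ge n-m-K_m$, which for $m$ in the stated range is close to $n$. I would enumerate bad sets by first choosing $B$ of size $b\in\bigl[n-m-K_m,\,\delta^2n^2/m\bigr]$, then choosing $S\subseteq V\setminus(B\cup N_G(B))$ of size $m$. A second application of the expander mixing lemma yields a lower bound $\ell(b)$ on $|N_G(B)|$ of the approximate form $b(1-b/n)-\delta\sqrt{b(n-b)}$, so the number of admissible $S$ for a given $B$ is at most $\binom{n-b-\ell(b)}{m}$.

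Plugging this into the union bound, one must show
$$\sum_{m=\alpha n/d}^{n/(3d)}\sum_{b}\binom{n}{b}\binom{n-b-\ell(b)}{m}p^m=o(1).$$
I would estimate each binomial via $\binom{n}{k}\le(en/k)^k$. The factor $p^m=((1+\epsilon)/d)^m$ produces decay like $d^{-m}$, and the hypothesis $\delta\le\alpha^{2/\alpha}$ should ensure that the enumeration cost is dominated by this decay, with each fixed pair $(m,b)$ contributing at most $\exp(-\Omega(n/d))$ after simplification.

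The main obstacle will be obtaining a sufficiently sharp bound on the number of bad sets, especially in the ``small $b$'' regime where the EML constraint $|B|\le\delta^2n^2/m$ is weak and the crude enumeration above is wasteful. I expect one must refine the counting---for example, by also exploiting the spectral codegree bound $\sum_{v}\binom{d(v,S)}{2}\le d^2m^2/(2n)+\lambda^2m/2$ to argue that bad $S$ must lie in rare ``dense'' substructures---or by partitioning the range of $m$ into sub-ranges handled separately. The unusually strong hypothesis $\delta\le\alpha^{2/\alpha}$, much smaller than $\alpha$ itself, presumably absorbs the losses accumulated in this refinement and the final summation over $b$.
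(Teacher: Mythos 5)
This lemma is not proved in the paper at all -- it is quoted verbatim as Lemma~2.4 of the cited site-percolation reference -- so the only question is whether your sketch would constitute a proof, and it would not. Your frame (first moment: bound the number of bad $m$-sets in $G$ and multiply by $\Pr[S\subseteq V_p]=p^m$) is the right one, and your structural observations are correct as far as they go: for bad $S$ the set $B=V\setminus(S\cup N_G(S))$ has $e(S,B)=0$, whence $|B|\le\delta^2n^2/m$ by Lemma~\ref{l: eml}, and $S$ avoids $B\cup N_G(B)$. But the enumeration ``choose $B$, then choose $S$'' is quantitatively hopeless, not merely wasteful. The cost of choosing $B$ is $\binom{n}{b}=\binom{n}{n-b}$, and for a bad set $n-b=m+|N_G(S)|$ can be (and, for ``barely bad'' sets, typically is) of order $dm$; since $dm\le n/3$, this factor can be as large as $(n/(dm))^{dm}\ge 3^{dm}$. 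The only decay you have is $p^m\le ((1+\epsilon)/d)^m=e^{-(1-o(1))m\ln d}$, and in exactly this regime the inner factor $\binom{n-b-\ell(b)}{m}$ is at least $1$ (with the sharper EML bound $\ell(b)=n-b-\delta^2n^2/b$ one needs $\delta^2n^2/b\ge m$, which holds because $\lambda\ge(1-o(1))\sqrt{d}$ forces $\delta^2\gtrsim 1/d$ while $m\le n/(3d)$). So individual terms of your displayed double sum are already at least $3^{dm}d^{-m}\to\infty$ for large $d$; the claimed bound ``each fixed pair $(m,b)$ contributes at most $\exp(-\Omega(n/d))$'' is false, and no hypothesis on $\delta$ can repair it, since $\delta$ only moves the endpoints of the $b$-range and the fatal values $b\approx n-m-K_m$ are always included. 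The problem is also not confined to the ``small $b$'' regime you flag: every $b$ with $n-b\gg m\log d$ is fatal, i.e.\ essentially the whole range.

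The actual content of the lemma is precisely the counting step you leave open: one must encode a bad set so that all but an $O(\alpha m)$-sized portion of $S$ is enumerated at the unavoidable cost $\approx(ed/\alpha)^m$ (which $p^m$ barely cancels), while the remaining $\Theta(\alpha m)$ vertices are shown -- using the badness and the spectral information, e.g.\ via codegree/excess-edge considerations -- to lie in a region whose size is controlled by $\delta$ and is determined by the rest of $S$. That is where the peculiar hypothesis $\delta\le\alpha^{2/\alpha}$ enters: a loss of order $(C/\alpha)^{m}$ must be beaten by a gain of order $\delta^{\Theta(\alpha m)}$, which is exactly a condition of the form $\delta\le\alpha^{\Theta(1/\alpha)}$. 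Your sketch names the codegree bound $\sum_v\binom{d(v,S)}{2}\le d^2m^2/(2n)+\lambda^2m/2$ as a possible ingredient but does not build such an encoding (note also that purely deterministic arguments cannot work: the lexicographic-product graphs of Section~\ref{sec: construction} do contain bad sets, so one really must count them against $p^m$). As it stands, the proposal is a plan whose decisive step is missing and whose concrete enumeration provably fails.
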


Let us now recall the notion of \textit{excess}. For a connected graph $H$, we define the excess of $H$ as $\mathrm{exc}(H)\coloneqq |E(H)|-|V(H)| + 1$. If $H$ has more than one connected component, then we set $\mathrm{exc}(H)$ to be the sum of the excesses of each of its components. It will be of use for us to estimate the excess of $G_p$. To that end, we require some additional results on site percolation on pseudo-random graphs.

Given $G_p$, we denote by $L_1$ the largest component of $G_p$. Let us further define $x$ to be the unique solution in $(0,1)$ of
\begin{equation}\label{eq: def of x}
    x=(1+\epsilon)(1-\exp\left\{-x\right\}).
\end{equation}
We note that $x=2\epsilon-\frac{2\epsilon^2}{3}+O(\epsilon^3)$ (see~\cite[Equation (4)]{zbMATH07751060}). The following theorem estimates the typical order of $L_1$.
\begin{thm}[Theorem 2 of~\cite{zbMATH07751060}]\label{DK1}
     Let $\alpha\coloneqq\alpha(\epsilon)\in(0,\epsilon^{8})$ be a constant. Suppose that $\delta\le \alpha^{2/\alpha}$. Then, \textbf{whp},
     $$\left||V(L_1)|-\frac{xn}{d}\right|\leq \frac{7\alpha n}{d},$$
     where $x$ is as in \eqref{eq: def of x}.
\end{thm}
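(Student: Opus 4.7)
The plan is to prove Theorem~\ref{DK1} via a Depth-First-Search (DFS) exploration of $G$ with on-the-fly revelation of $V_p$. I would maintain disjoint sets $(S, U, T)$ in which $S$ consists of finished alive vertices, $U$ is the active stack (always forming a path in $G_p$), and $T$ is untouched; upon first touching any $v \in T$, I flip an independent coin of probability $p$ to decide whether $v \in V_p$. The emergence of the constant $x$ from \eqref{eq: def of x} is driven by the ``local geometry'' of $G_p$: it has approximately $(1+\epsilon)n/d$ alive vertices of mean alive-degree $\approx 1+\epsilon$, so the corresponding branching process of mean offspring $1+\epsilon$ survives with probability $\beta = 1 - e^{-(1+\epsilon)\beta}$, which after rescaling gives $x = (1+\epsilon)\beta = (1+\epsilon)(1 - e^{-x})$.

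For the lower bound $|V(L_1)| \ge (x-7\alpha)n/d$, let $\tau$ be the first DFS-time at which $|S \cup U|$ reaches $(x - 2\alpha)n/d$ alive vertices. Lemma~\ref{lemma:binomial-bounds} pins the number of vertices touched by time $\tau$ around $(x - 2\alpha)n/(1+\epsilon)$, and one can bound the alive-accumulation rate as a function of the fraction of $G$ already explored (this is where the fixed-point equation enters naturally). Suppose for contradiction that $|U|$ stays below $\alpha n/d$ throughout $[0,\tau]$; then the exploration has completed components whose total aliveness is $(x - 2\alpha - o(1))n/d$. Lemma~\ref{expanding} forbids any single component from having size in $[\alpha n/d, n/(3d)]$, so each completed component has fewer than $\alpha n/d$ vertices, forcing the number of ``fresh starts'' to exceed what Chernoff allows. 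Hence $|U|$ crosses $\alpha n/d$ at some moment before $\tau$; since $U$ always lies inside a single component, a sprinkling-style continuation upgrades this to a component of at least $(x - 7\alpha)n/d$ alive vertices.

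For the upper bound $|V(L_1)| \le (x+7\alpha)n/d$, I would run the same DFS and analyse the first time the stack drops back to zero after having collected more than $(x + \alpha)n/d$ alive vertices in a single component. The governing inequality is that the alive-density cannot persistently satisfy $|S\cup U|d/n > (1+\epsilon)(1 - e^{-|S\cup U|d/n})$, which fails beyond $xn/d$; combining a Chernoff bound (Lemma~\ref{lemma:binomial-bounds}) with Lemma~\ref{l: eml} to control the edge-density inside the emerging component shows that the probability of surpassing $(x + 7\alpha)n/d$ is exponentially small.

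The main obstacle I foresee is bookkeeping the constants: $\alpha$ must satisfy $\alpha \le \epsilon^8$ so that Lemma~\ref{expanding} applies on the relevant size window, the ``$7\alpha$'' slack must absorb three separate Chernoff error bars (initialisation, mid-trajectory, and closing), and the pseudo-random assumption $\delta \le \alpha^{2/\alpha}$ must remain compatible with the deviation bounds used at each stage. The conceptual picture is standard, but this constant-chasing is where the delicate work lives.
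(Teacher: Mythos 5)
The paper does not prove this statement at all: it is imported verbatim as Theorem~2 of~\cite{zbMATH07751060}, so there is no in-paper argument to compare yours against. Your DFS-exploration outline is in the right spirit (the source reference does build on such an exploration), but two steps as written would not go through. First, you assert that if the stack $|U|$ stays below $\alpha n/d$ up to time $\tau$, then all completed components must have fewer than $\alpha n/d$ vertices, and that Lemma~\ref{expanding} ``forbids'' components in the window $[\alpha n/d,\, n/(3d)]$. Neither implication holds as stated: the stack $U$ records only the current active path and can be far smaller than the component being explored (vertices are continually moved from $U$ into $S_1$), so a bound on $|U|$ does not bound component size; and Lemma~\ref{expanding} merely guarantees vertex expansion for sets in $V_p$ of the right size --- it says nothing directly about components of $G_p$. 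To rule out middle-sized components one must couple this expansion bound with the observation that a finished component's external boundary $N_G(C)$ avoids $V_p$ entirely, and then union-bound over all candidate sets; none of that appears in the sketch.

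Second, the entire difficulty of the theorem is pinning $|V(L_1)|$ to $xn/d$ within error $7\alpha n/d$, and exactly this part is left as gestures: the ``sprinkling-style continuation'' upgrading a component of size $\alpha n/d$ to one of size $(x-7\alpha)n/d$, and the upper-bound claim that ``the alive-density cannot persistently satisfy'' the fixed-point inequality. Obtaining the sharp constant $x$ from \eqref{eq: def of x} requires a quantitative tracking of the exploration (e.g., via the edge-boundary evolution and a second-moment or concentration argument at each stage), not the single invocation of Lemma~\ref{lemma:binomial-bounds} you describe. As it stands, the proposal would at best yield a giant component of size $\Omega(\alpha n/d)$, which is far weaker than the statement.
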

The next result estimates the typical number of edges in $L_1$.
\begin{thm}[Theorem 4 of~\cite{zbMATH07751060}] \label{DK2}
    Let $\alpha\coloneqq\alpha(\epsilon)\in(0,\epsilon^{8})$ be a constant. Suppose that $\delta\le \alpha^{2/\alpha}$. Then, \textbf{whp},
     $$\left|e(L_1)-\frac{((1+\epsilon)^2-(1+\epsilon-x)^2)n}{2d}\right|\leq \frac{8\alpha^{1/4} n}{d},$$
     where $x$ is as in \eqref{eq: def of x}.
\end{thm}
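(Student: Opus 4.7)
The right-hand side factors as
\[
\frac{(1+\epsilon)^2 n}{2d} - \frac{y^2 n}{2d}, \quad \text{where } y := 1+\epsilon - x,
\]
which is the signature of the decomposition
\[
e(L_1) \;=\; e(G_p) \;-\; e\bigl(G_p[V_p \setminus V(L_1)]\bigr),
\]
valid because $L_1$, being a connected component of $G_p$, has no $G_p$-edges to its complement in $V_p$. The plan is to estimate the two terms separately and subtract.

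The first term is routine: $\mathbb{E}[e(G_p)] = p^2 \, e(G) = (1+\epsilon)^2 n/(2d)$, and two distinct edges contribute to the covariance only if they share a vertex; since there are $O(nd^2)$ such incident pairs, each with covariance $O(p^3) = O(1/d^3)$, one gets $\mathrm{Var}(e(G_p)) = O(n/d)$. Chebyshev's inequality then yields $e(G_p) = (1+\epsilon)^2 n/(2d) + o(n/d)$ whp, well within the allowed error.

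The second term is the heart of the proof. For every edge $uv \in E(G)$, condition on $u, v \in V_p$ (probability $p^2$); then $uv \in E(G_p)$, and $u, v \in V_p \setminus V(L_1)$ exactly when the component of $\{u,v\}$ in $G_p$ is not the giant. Exposing this component by BFS, the expander mixing lemma (Lemma~\ref{l: eml}), combined with the assumption $\lambda/d \leq \alpha^{2/\alpha}$, forces each revealed layer to be extremely close to its conditional expectation, so the exploration is well approximated by a Galton--Watson branching process with offspring distribution $\mathrm{Bin}(d,p) \approx \mathrm{Poisson}(1+\epsilon)$ started from two roots. Its extinction probability is $q^2$, where $q = e^{-x}$ solves $q = e^{-(1+\epsilon)(1-q)}$; using \eqref{eq: def of x} one verifies $q = y/(1+\epsilon)$. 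Summing over edges,
\[
\mathbb{E}\bigl[e\bigl(G_p[V_p \setminus V(L_1)]\bigr)\bigr] \;\approx\; \frac{dn}{2}\cdot p^2 \cdot q^2 \;=\; \frac{y^2 n}{2d}.
\]

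The main obstacle is concentration for the second term: a single vertex flip could, in principle, reorganise which components are giant versus small and cascade through many edges, so a naive vertex-exposure Azuma bound gives Lipschitz constants that are too large. I would handle this by combining Lemma~\ref{expanding} --- which forces every non-giant component of $G_p$ to have size at most $\alpha n/d$ --- with a second-moment computation in which the events attached to two distinct edges $uv, u'v' \in E(G)$ are decoupled: whp the two BFS explorations do not meet (again by the expansion estimate of Lemma~\ref{expanding}), so the joint ``small-component'' probability is close to the product, giving $\mathrm{Var} = o((n/d)^2)$. Chebyshev then yields concentration on the required scale $\alpha^{1/4} n/d$, and subtracting the two estimates completes the proof.
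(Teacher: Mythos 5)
First, note that the paper never proves this statement: it is imported verbatim as Theorem~4 of~\cite{zbMATH07751060}, so there is no in-paper argument to compare yours against; what you are attempting is a proof of that external theorem. Your decomposition $e(L_1)=e(G_p)-e\bigl(G_p[V_p\setminus V(L_1)]\bigr)$, the variance bound for $e(G_p)$, and the identification $e^{-x}=(1+\epsilon-x)/(1+\epsilon)$ are all fine, but the two steps that carry the whole theorem are asserted rather than proved, and the justifications you offer do not work.

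First, the expander mixing lemma cannot ``force each revealed layer to be close to its conditional expectation'': its error term $\lambda\sqrt{|A||B|}$ exceeds the main term $\frac{d}{n}|A||B|$ unless $|A||B|\gg(\lambda n/d)^2=(\delta n)^2$, whereas the sets arising in the exploration of a single component have size $O(n/d)\ll\delta n$ when $\delta$ is a constant, so Lemma~\ref{l: eml} says essentially nothing at that scale. A constant spectral ratio also permits locally dense structure --- for instance cliques of size $\Theta(\delta d)$, exactly as in the lexicographic-product graphs of Section~\ref{sec: construction} --- so the exploration from an edge is not automatically a two-root $\mathrm{Bin}(d,p)$ branching process: for an edge inside such a clique the endpoints have nearly identical neighbourhoods and the ``small component'' probability is close to $q$, not $q^2$. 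One must show that such defects are rare enough to be absorbed in the error term; this is precisely where the hypothesis $\delta\le\alpha^{2/\alpha}$ and the explicit bound $8\alpha^{1/4}n/d$ enter, and your sketch neither does this nor tracks any quantitative error. Second, Lemma~\ref{expanding} does not by itself force every non-giant component to have size at most $\alpha n/d$: it only excludes subsets of $V_p$ of size between $\alpha n/d$ and $n/(3d)$ with small $G$-neighbourhood, and converting that into ``no middle-size components and a unique giant'' requires a sprinkling/uniqueness argument, which is essentially the content of Theorem~\ref{DK1}. Likewise, ``the two BFS explorations whp do not meet'' is not something Lemma~\ref{expanding} provides; the decoupling needed for your second-moment bound must be argued from the exposure process, conditioning included. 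In short, the plan is a reasonable heuristic outline, but the hard quantitative core of the cited theorem is missing.
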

Finally, the following result estimates the typical number of edges in $G_p$ which are neither in $L_1$ nor in isolated trees.
\begin{thm}[Lemma 6.4 of~\cite{zbMATH07751060}] \label{DK3}
     Let $\alpha\coloneqq\alpha(\epsilon)\in(0,\epsilon^{8})$ be a constant. Suppose that $\delta\le \alpha^{2/\alpha}$. Then, \textbf{whp}, the number of edges in $G_p$ which are in components that are neither the giant component nor isolated trees is at most $\frac{7\alpha^{1/4} n}{d}$.
\end{thm}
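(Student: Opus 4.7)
The plan is to construct $G$ as a clique blow-up of a small pseudo-random base graph. Clique blow-ups are ideally suited here: the spectrum is essentially inherited from the base, while vertices within a common blow-up clique are fully interchangeable, rigidly constraining all induced subgraphs.

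Fix a large constant $r = r(\delta)$ such that $(2\sqrt{r-1}+2)/(r+1) < \delta$. For each integer $k \ge r+1$ (with $rk$ even), let $H_k$ be an $r$-regular graph on $k$ vertices with $\lambda_{H_k} \le 2\sqrt{r-1}+1$; such graphs exist for infinitely many $k$, for instance as typical random $r$-regular graphs via Friedman's theorem. Given a sufficiently large target degree $d$, assume for the moment that $(r+1)\mid(d+1)$ and set $s := (d+1)/(r+1)$. Define $G_k := H_k[K_s]$, the lexicographic product, with vertex set $V(H_k)\times[s]$ in which $(v,i)\sim(u,j)$ iff either $v=u$ with $i\ne j$, or $vu\in E(H_k)$. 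Then $n := ks$ and $G_k$ is $d$-regular, since each vertex has $s-1$ neighbours in its own clique and $rs$ neighbours across.

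The spectral verification is a standard tensor computation: $A(G_k) = A(H_k)\otimes J_s + I_k\otimes (J_s-I_s)$, whose spectrum is $\{s\mu + s - 1 : \mu\in\mathrm{Spec}(H_k)\}$ together with the eigenvalue $-1$ of multiplicity $k(s-1)$. The Perron value is $sr + s - 1 = d$, and every other eigenvalue has absolute value at most $s\lambda_{H_k} + s - 1$, so (using $\lambda_{H_k}\le r$)
\[
\frac{\lambda(G_k)}{d} \;\le\; \frac{s\lambda_{H_k}+s-1}{s(r+1)-1} \;\le\; \frac{\lambda_{H_k}+1}{r+1} \;\le\; \delta.
\]

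For the main combinatorial step, label the cliques $V_v := \{v\}\times[s]$ for $v\in V(H_k)$. If $S, T\subseteq V(G_k)$ satisfy $|S\cap V_v| = |T\cap V_v|$ for every $v$, any collection of within-clique bijections $S\cap V_v \to T\cap V_v$ assembles into a graph isomorphism $G_k[S]\to G_k[T]$, since every pair of distinct vertices within a clique is adjacent, and between two cliques the adjacency is determined solely by whether $vu\in E(H_k)$. Thus the isomorphism type of $G_k[S]$ is determined by the tuple $f_S : v\mapsto |S\cap V_v|\in\{0,\dots,s\}$, which yields
\[
\mu(G_k) \;\le\; (s+1)^k, \qquad \log \mu(G_k) \;\le\; k\log(s+1) \;=\; \frac{n}{s}\log(s+1) \;\le\; \frac{(r+1)n}{d}\log(d+1).
\]
Since $r$ depends only on $\delta$, this is $O_\delta(n\log d/d)$, and letting $k$ range over valid sizes produces infinitely many valid $n$.

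The one genuinely delicate point is the divisibility assumption $(r+1)\mid(d+1)$, which fails for some $d$ (notably when $d+1$ is prime). I would handle this with a bounded modification of the construction: choose the smallest integer $s$ with $s(r+1) - 1 \ge d$ and set $j := s(r+1)-1-d \in \{0,1,\dots,r\}$, then delete a fixed vertex-transitive $j$-regular subgraph $J$ (the same inside every clique) from each blow-up clique. This shifts each eigenvalue by at most $j = O_\delta(1)$ (a lower-order correction to $\lambda(G_k)/d$), and, since each modified clique has only $\mu(K_s\setminus J)$ distinct induced subgraph types, replaces the factor $(s+1)^k$ by $\mu(K_s\setminus J)^k$; choosing $J$ to be a disjoint union of cycles or a small Cayley graph keeps $\log \mu(K_s\setminus J) = O_\delta(\log s)$, which is absorbed into the constant $C$. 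Parity of $|V(G_k)|\cdot j$ is handled by adjusting $k$ by $1$ when necessary.
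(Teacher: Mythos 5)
Your proposal does not address the statement at all. The statement (Theorem~\ref{DK3}) is a result about \emph{site percolation}: for $p=\frac{1+\epsilon}{d}$ and an $(n,d,\lambda)$-graph $G$ with spectral ratio $\delta\le\alpha^{2/\alpha}$, with high probability the number of edges of the random induced subgraph $G_p$ lying in components that are neither the giant component nor isolated trees is at most $\frac{7\alpha^{1/4}n}{d}$. What you have written is a deterministic construction — a clique blow-up (lexicographic product with $K_s$) of a small pseudo-random base graph, a spectral computation for the product, and a count of non-isomorphic induced subgraphs of order $(s+1)^k$. That argument is aimed at the upper-bound constructions of Theorems~\ref{th: construction} and~\ref{th: isomorphism upper bound}; it contains no probability, no percolated graph $G_p$, no giant component, and no accounting of edges in small non-tree components, so it cannot prove the claimed bound even in outline.

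For the record, the paper itself does not prove Theorem~\ref{DK3}: it is imported verbatim as Lemma~6.4 of the cited reference on site percolation on pseudo-random graphs, and is then used (together with Theorems~\ref{DK1} and~\ref{DK2}) to bound the excess of $G_p$ in Lemma~\ref{exc}. A genuine proof would have to analyse the component structure of $G_p$ at $p=\frac{1+\epsilon}{d}$: typically one shows via expansion/sprinkling (using the expander mixing lemma or Lemma~\ref{expanding}-type statements) that outside the giant all components are small, and then bounds the number of ``surplus'' edges in small components, e.g.\ by a first-moment estimate on short cycles or on connected subsets spanning more edges than vertices. None of these ingredients appears in your write-up, so the proposal has to be judged as answering a different question rather than containing a fixable gap in a proof of this one.
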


With these three results at hand, we can now estimate the typical excess of $G_p$. 
\begin{lemma} \label{exc}
Suppose that $\delta\le \epsilon^{24/\epsilon^{12}}$. Then, \textbf{whp}, $\mathrm{exc}(G_p)= O(\epsilon^3)n/d$.
\end{lemma}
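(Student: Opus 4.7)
The plan is to set $\alpha=\epsilon^{12}$, which satisfies both $\alpha\in(0,\epsilon^{8})$ and the hypothesis $\delta\le\alpha^{2/\alpha}=\epsilon^{24/\epsilon^{12}}$, and then apply Theorems~\ref{DK1}, \ref{DK2}, \ref{DK3} with this $\alpha$. I will decompose the excess as
\[
\mathrm{exc}(G_p)=\mathrm{exc}(L_1)+\sum_{C\neq L_1}\mathrm{exc}(C),
\]
and handle each piece separately. Components that are isolated trees contribute $0$ to the excess by definition. For components that are neither the giant nor isolated trees, their total excess is bounded by their total number of edges, so Theorem~\ref{DK3} gives a contribution of at most $\tfrac{7\alpha^{1/4}n}{d}=O(\epsilon^{3})n/d$ whp.

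The main work is showing that $\mathrm{exc}(L_1)=O(\epsilon^{3})n/d$ whp; this is where the leading-order terms must cancel. Using Theorems~\ref{DK1} and \ref{DK2}, whp
\[
\mathrm{exc}(L_1)=e(L_1)-|V(L_1)|+1=\frac{n}{2d}\bigl((1+\epsilon)^{2}-(1+\epsilon-x)^{2}\bigr)-\frac{xn}{d}+O\!\left(\frac{\alpha^{1/4}n}{d}\right).
\]
Expanding the square gives $(1+\epsilon)^{2}-(1+\epsilon-x)^{2}=x\bigl(2(1+\epsilon)-x\bigr)$, so the main term equals
\[
\frac{n}{d}\left(x(1+\epsilon)-\frac{x^{2}}{2}-x\right)=\frac{n}{d}\left(x\epsilon-\frac{x^{2}}{2}\right).
\]
Now I would invoke the expansion $x=2\epsilon-\tfrac{2\epsilon^{2}}{3}+O(\epsilon^{3})$ cited after \eqref{eq: def of x}: this gives $x\epsilon=2\epsilon^{2}+O(\epsilon^{3})$ and $x^{2}/2=2\epsilon^{2}+O(\epsilon^{3})$, whence the leading $2\epsilon^{2}$ terms cancel and $x\epsilon-x^{2}/2=O(\epsilon^{3})$. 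Combined with the error term $O(\alpha^{1/4}n/d)=O(\epsilon^{3}n/d)$, we obtain $\mathrm{exc}(L_1)=O(\epsilon^{3})n/d$ whp.

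Summing the three contributions yields $\mathrm{exc}(G_p)=O(\epsilon^{3})n/d$ whp, as claimed. The only non-routine step is the algebraic cancellation for the giant's excess; all the probabilistic content is already packaged in Theorems~\ref{DK1}--\ref{DK3}, so once $\alpha$ is chosen of order $\epsilon^{12}$ (to match the $\alpha^{1/4}$ error bars with the target $\epsilon^{3}$), the proof is a direct computation. I do not expect any serious obstacle beyond carefully tracking the Taylor expansion of $x$ to second order.
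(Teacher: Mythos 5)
Your proposal is correct and follows essentially the same approach as the paper: set $\alpha=\epsilon^{12}$, apply Theorems~\ref{DK1}, \ref{DK2}, \ref{DK3}, bound the non-giant non-tree components' excess by their edge count, and exploit the cancellation $\epsilon x - x^2/2 = O(\epsilon^3)$. The only difference is cosmetic: you spell out the second-order Taylor expansion of $x$ that the paper simply states as known.
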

\begin{proof}
Note that under the above assumption, we can take $\alpha$ from Theorems \ref{DK1}, \ref{DK2}, and \ref{DK3} to be $\alpha=\epsilon^{12}$. Then, by Theorems \ref{DK1} and \ref{DK2}, 
\begin{align*}
    \mathrm{exc}(G_p[L_1])&\le \frac{((1+\epsilon)^2-(1+\epsilon-x)^2)n}{2d}+\frac{8\epsilon^{12/4} n}{d}-\frac{xn}{d}+\frac{7\epsilon^{12} n}{d}+1\\
    &\le \frac{\left(\epsilon x-x^2/2\right)n}{d}+\frac{O(\epsilon^3)n}{d}=\frac{O(\epsilon^3)n}{d},
\end{align*}
where the last equality follows from $\epsilon x-x^2/2=O(\epsilon^3)$. Since isolated trees have no excess, by the above together with Theorem \ref{DK3}, we conclude that \textbf{whp}
\begin{align*}
    \mathrm{exc}(G_p)\le \mathrm{exc}(G_p[L_1])+\frac{7\epsilon^{12/4}n}{d}=\frac{O(\epsilon^3)n}{d},
\end{align*}
as required.
\end{proof}

We conclude this section with a variant of the Depth First Search (DFS) algorithm, which we will utilise when proving Theorem \ref{th: percolated path}. The variant combines ideas of the algorithm presented in~\cite{DGK22} together with the one presented in~\cite{zbMATH06534995}.

The algorithm is fed a graph $G=(V,E)$ with an ordering $\sigma$ on its vertices, and a sequence $(X_v)_{v\in V}$ of i.i.d. Bernoulli$(p)$ random variables (with $0\le p\le 1$). We maintain five sets of vertices: $T$, the set of vertices yet to be processed; $U\subseteq V_p$, the set of active vertices, kept in a stack (the last vertex to enter $U$ is the first to leave); $S_1$ and $S_2$, the sets of processed vertices (which fell into $V_p$); and $W$, the set of processed vertices which fell outside of $V_p$. We initialise $U,S_1,S_2,W=\varnothing$ and $T=V$. The algorithm terminates once $U\cup T=\varnothing$. As we will see, throughout the execution, $U$ will span an induced path in $G_p$. 

Each step of the algorithm corresponds to exposing a random variable $X_v$, and proceeds as follows.
\begin{enumerate}
    \item If $U$ is empty, we consider the first vertex $v$ in $T$ according to $\sigma$.\label{first}
    \begin{enumerate}
        \item If $X_v=1$, we move $v$ from $T$ to $U$.
        \item Otherwise (that is, if $X_v=0$), we move $v$ from $T$ to $W$.
    \end{enumerate}
    \item If $U\neq \varnothing$, let $u$ be the vertex on the top of the stack $U$.
    \begin{enumerate}
        \item If $u$ has no neighbours in $T$, move $u$ from $U$ into $S_1$, and return to \ref{first} and proceed.
        \item Otherwise, let $v$ be the first vertex in $T$ according to $\sigma$ such that $uv\in E$.
        \begin{enumerate}
            \item If $X_v=0$, we move $v$ from $T$ to $W$.
            \item If $X_v=1$ and $v$ has a neighbour in $U\setminus \{u\}$, we move $v$ from $T$ to $S_2$.
            \item Otherwise (that is, if $X_v=1$ and $v$ has no neighbours in $U\setminus\{u\}$), we move $v$ from $T$ to $U$.
        \end{enumerate}
    \end{enumerate}
\end{enumerate}
We will make use of the following simple observations about the above algorithm. First, at every step of the algorithm, $G_p[U]$ spans an induced path. Further, at every step, there are no edges between $S_1$ and $T$, and thus $N_G(S_1)\subseteq S_2\cup U\cup  W$. Moreover, for every integer $0\le k\le n$, after $k$ steps $|S_1\cup S_2\cup U\cup W|=k$.

Finally, observe that at any step, the connected component $C$ of $G_p$ currently explored (that is, the one containing vertices in $U$) stays connected when restricted to $S_1\cup U$, and every vertex of $C$ in $S_2$ sends at least two edges to $S_1\cup U$. Therefore, $|S_2|\le \mathrm{exc}(G_p)$.

\section{Existence of induced paths and cycles}\label{sec: proof}
We begin by finding an induced path in the percolated graph $G_p$.

\begin{proof}[Proof of Theorem \ref{th: percolated path}]
Let $\delta = \epsilon^{24/\epsilon^{12}}$. Run the DFS algorithm described in Section \ref{sec: prelim} on $G$. We claim that after $\epsilon n$ steps in the algorithm, \textbf{whp}, $|U|\geq \frac{\epsilon^2n}{3d}$, and thus, as $U$ forms an induced path in $G_p$, the statement follows. 
    
Indeed, after $\epsilon n$ steps, $|U\cup S_1\cup S_2|\sim Bin(\epsilon n,p)$. Thus, by Lemma~\ref{lemma:binomial-bounds}, with probability at least $1-2\exp\left\{-\frac{(n/d)^{4/3}}{4\epsilon n /d}\right\}$ we have that $\bigg||U_1\cup S_1\cup S_2|-\frac{(1+\epsilon)\epsilon n}{d}\bigg|\le (n/d)^{2/3}$. Further, by Lemma~\ref{exc} together with our assumption on $\delta$, we have that \textbf{whp} $|S_2|=O(\epsilon^3)n/d$. Assume towards contradiction that $|U|< \frac{\epsilon^2n}{3d}$. Then, \textbf{whp},
\begin{align*}
    |S_1|\ge \frac{(1+\epsilon)\epsilon n}{d}-n^{2/3}-O(\epsilon^3)\frac{n}{d}-\frac{\epsilon^2 n}{3d}\ge \frac{(\epsilon +3\epsilon^2/5)n}{d}.
\end{align*}
On the other hand, \textbf{whp},
$$|S_1|\le |U\cup S_1\cup S_2|\le 4\epsilon n/(3d)<n/(3d),$$ 
and thus by Lemma~\ref{expanding} applied with $\alpha=\epsilon^{12}$ (which is possible by our assumption on $\delta$), we have that \textbf{whp}
\begin{align*}
    |N_G(S_1)|\ge (1-\epsilon^{12})\left(\epsilon+3\epsilon^2/5-\frac{1}{2}(\epsilon+3\epsilon^2/5)^2\right)n>\epsilon n,
\end{align*}
a contradiction, since $N_G(S_1)\subseteq S_2\cup U\cup W$ and $|S_2\cup U\cup W|\le |S_1\cup S_2\cup U\cup W|\le \epsilon n$.
\end{proof}

With this result at hand, we are ready to prove Theorem \ref{th: large hole}.
\begin{proof}[Proof of \Cref{th: large hole}]
Let $\epsilon,\delta(\epsilon)>0$ be as in the statement of Theorem \ref{th: percolated path}. We may assume that $\delta_1\le \delta(\epsilon)$. Let $p=\frac{1+\epsilon}{d}$. Then, by Theorem \ref{th: percolated path}, \textbf{whp} $G_p$ contains an induced path of length at least $\frac{\epsilon^2n}{3d}$. Thus, deterministically, $G$ contains an induced path $P$ on $k=\frac{\epsilon^2n}{3d}$ vertices (note that for the latter to hold, we merely needed the first statement to hold with positive probability).

Let $P=\{v_1,\ldots,v_{k}\}$. Let $P_1=\{v_1,\ldots, v_{k/3}\}$ be the first $\frac{\epsilon^2n}{9d}$ vertices of $P$, and let $P_2=\{v_{2k/3+1},\ldots,v_{k}\}$ be the last $\frac{\epsilon^2n}{9d}$ vertices of $P$. Let $P'=\{v_{k/2-k/20+1},\dots,v_{k/2+k/20}\}$ be the set of $\frac{\epsilon^2n}{30d}$ vertices at the middle of $P$. By \Cref{expanding}, $|N_G(P_1)|, |N_G(P_2)|\geq \frac{\epsilon^2n}{10}$. Since $G$ is $d$-regular, $|N_G(P')|\leq d|P'|=\frac{\epsilon^2n}{30}$. Set $N_1=N_G(P_1)\setminus \left(P\cup N_G(P')\right)$ and similarly $N_2=N_G(P_2)\setminus \left(P\cup N_G(P')\right)$. Note that $|N_1|,|N_2|\geq \frac{\epsilon^2n}{30}$. In particular, by Lemma \ref{l: eml}, $e(N_1,N_2)\ge \epsilon^5 dn>0$.

First, suppose that $N_1\cap N_2\neq \varnothing$. Let $u\in N_1\cap N_2$. Let $w_1$ be the neighbour of $u$ on $P_1$ closest to $P'$, that is, to $v_{k/2-k/20+1}$. Similarly, let $w_2$ be the neighbourhood of $u$ closest to $P'$, that is, to $v_{k/2+k/20}$. Then, $u$ together with the subpath of $P$ starting at $w_1$ and ending at $w_2$ forms an induced cycle in $G$ whose length is at least $|P'|=\frac{\epsilon^2n}{30d}$.

Otherwise, we may assume $N_1\cap N_2=\varnothing$. Let $u_1\in N_1$, $u_2\in N_2$ be such that $u_1u_2\in E(G)$. Let $w_1$ be the neighbour of $u_1$ on $P_1$ closest to $P'$, noting that $u_1$ has no neighbours in $P'\cup P_2$. Similarly, let $w_2$ be the neighbour of $u_2$ closest to $P'$, noting that $u_2$ has no neighbours in $P'\cup P_1$. All that is left is to observe that $u_1u_2$ together with the subpath of $P$ starting at $w_1$ and ending at $w_2$ forms an induced cycle in $G$ whose length is at least $|P'|=\frac{\epsilon^2n}{30d}$. This completes the proof, with $\delta_2=\epsilon^2/30$.
\end{proof}
\begin{remark}\label{remark: cycle in Gp}
We note that, with a bit more effort, one can show the typical existence of an induced cycle of length $\Omega(\epsilon^2n/d)$ in $G_p$. Let us give a sketch of the proof. We may employ a sprinkling argument, setting $p_2=\frac{\epsilon^3}{d}$, and $p_1$ to be such that $(1-p_1)(1-p_2)=1-p$. We then have that $G_p$ has the same distribution as $G[V_{p_1}\cup V_{p_2}]$, and $p_1\ge \frac{1+\epsilon-\epsilon^3}{d}$. By Theorem \ref{th: percolated path}, \textbf{whp} there is an induced path $G[V_{p_1}]$ of length at least $\Omega(\epsilon^2n/d)$. Similar to the above proof, one can consider $N_1,N_2$ the neighbourhoods in $G$ of some sufficiently long prefix and suffix of the path, which have \textbf{whp} at least $\epsilon^5dn$ edges between them. We can then consider sequentially every vertex in $N_1$, and whether it falls into $V_{p_2}$, noting that a typical vertex in $N_1$ will have $\Omega(d)$ neighbours in $N_2$. Upon reaching a sufficiently large subset $W\subseteq N_1$ in $V_{p_2}$ whose neighbourhood (in $G$) in $N_2$ is of order $d|W|$, we may percolate $N_2$ with probability $p_2$, and \textbf{whp} obtain an edge in $G[V_{p_2}]$ between $N_1$ and $N_2$, and then complete the proof as before.    
\end{remark}

\section{Non-isomorphic induced subgraphs}\label{sec: isomorphism}
The proof of Theorem \ref{th: isomorphism lower bound} will utilise Theorem \ref{th: percolated path}, together with Lemma \ref{expanding}.
\begin{proof}[Proof of Theorem \ref{th: isomorphism lower bound}]
Let $\epsilon,\delta(\epsilon)>0$ be as in the statement of Theorem \ref{th: percolated path}. We may assume that $\delta_1\le \delta(\epsilon)$, and in particular, $\delta_1\le \epsilon^{8/\epsilon^4}$. Let $p=\frac{1+\epsilon}{d}$. Then, by Theorem \ref{th: percolated path}, \textbf{whp} $G_p$ contains an induced path $P$ of length exactly $\frac{\epsilon^2n}{3d}$. Furthermore, by Lemma \ref{expanding} and by our assumption on $\delta_1$, \textbf{whp} $|N_G(P)|\ge (1-\epsilon^4)\left(\frac{\epsilon^2n}{3}-\frac{\epsilon^4n}{18}\right)\ge \frac{\epsilon^2n}{3}-\epsilon^4n$. Thus, there exists (deterministically) an induced path $P=\{v_1,v_2,\ldots,v_k\}$ in $G$ on $k=\frac{\epsilon^2n}{3d}$ vertices, such that $|N_G(P)|\ge \frac{\epsilon^2n}{3}-\epsilon^4n$.

Now, since $G$ is $d$-regular, we have that $\sum_{v\in N_G(P)}d(v,P)\le \sum_{v\in P}d(v)=\epsilon^2n/3$. Suppose towards contradiction that there are more than $2\epsilon^4n$ vertices in $N_G(P)$ each having at least two neighbours in $P$. Then, 
\begin{align*}
    \sum_{v\in N_G(P)}d(v,P)\ge |N_G(P)|+2\epsilon^4n>\epsilon^2n/3,
\end{align*}
which is a contradiction. Hence, there exists a set $U\subseteq N_G(P)$ of size at least $\frac{\epsilon^2n}{3}-3\epsilon^4n$, such that every $u\in U$ has exactly one neighbour in $P$. In particular, this implies that there are at least $\frac{\epsilon^2n}{10d}$ vertices in $P$ which have at least $\frac{d}{10}$ neighbours in $U$. Let us denote the set of these vertices in $P$ by $W$. Let us then choose a subset $W'\subseteq W$, such that $v_1,v_2,v_{k-1},v_k\notin W'$, and the distance in $P$ between any $u,u'\in W'$ is at least $4$. Note that we can choose such $W'$ with $|W'|\ge |W|/5\ge \frac{\epsilon^2n}{50d}$. Crucially, observe that every $u\in N_G(W')\cap U\coloneqq N_{W'}$ has a unique neighbour $v\in W'$ on $P$.

Now, let $H\coloneqq G[V(P)\cup N_{W'}]$. Let $A\neq A'$ be subsets of $N_{W'}$, such that $G[V(P)\cup A]\cong G[V(P)\cup A']$. Let $\psi:V(P)\cup A\to V(P)\cup A'$ be an isomorphism between these two graphs. We claim that $\psi\mid _{V(P)}$ is either the trivial automorphism of $P$, or the non-trivial involution automorphism of $P$. First, note that the only two vertices in $H$ which are of degree one and have a neighbour of degree two are $v_1$ and $v_k$. Indeed, $v_2,v_{k-1}\notin W'$ and thus have degree two, and any other vertex of degree one is in $N_{W'}$, and thus has a neighbour on $V(P)$ of degree at least three. It thus suffices to show that $\psi$ sends vertices of degree two in $V(P)$ to (possibly other) vertices of degree two in the same set $V(P)$.

Suppose towards contradiction that there exists $u\in V(P)$ such that $d_H(u)=2=d_{G[V(P)\cup A]}(u)$ and $\psi(u)\in A'$. Since every vertex in $A'\subseteq N_{W'}$ has a unique neighbour in $P$ and since $\psi(u)$ has degree two in $G[V(P)\cup A']$, we have that $\psi(u)$ has a neighbour in $A'$, which we denote by $v$. Let $x_1$ be the unique neighbour of $\psi(u)$ in $P$, and let $x_2$ be the unique neighbour of $v$ in $P$, noting that both $x_1$ and $x_2$ have degree at least three in $G[V(P)\cup A']$. We then have that $\psi(u)$ belongs to the path $x_1\psi(u)vx_2$ in $G[V(P)\cup A']\subseteq H$. However, by construction of $H$, the two closest vertices to $u$ which have degree at least $3$ in $G[V(P)\cup A]$ (and therefore, in $H$) are at distance at least four from each other --- contradiction. 

Therefore, any automorphism $\psi$ has that $\psi\mid_{V(P)}$ is one of the two involution automorphisms. Hence, for a fixed automorphism of $P$, any two subsets $A,A'\in N_{W'}$ having $|A'\cap N(v)\cap N_{W'}|\neq|A'\cap N(v)\cap N_{W'}|$ for some $v\in W'$ cannot span (together with $P$) isomorphic subgraphs. We thus conclude that, for every tuple $(0\leq s_v\leq d/10)_{v\in W'}$, there exists at most one \textit{other} tuple $(0\leq s'_v\leq d/10)_{v\in W'}$ satisfying the following: there exist two subsets $A,A'\subset N_{W'}$ such that $|A\cap N(v)\cap N_{W'}|=s_v$ and $|A'\cap N(v)\cap N_{W'}|=s'_v$ for every $v\in W'$ and $G[V(P)\cup A],G[V(P)\cup A']$ are isomorphic. Therefore,
$$
\mu(G)\geq\frac{1}{2}(d/10)^{|W'|}\geq \frac{1}{2}\left(\frac{d}{10}\right)^{\frac{\epsilon^2n}{50d}}\ge\exp\left(\frac{\delta_2n\log d}{d}\right),
$$
for small enough $\delta_2$, as required.
\end{proof}

\section{Constructions}\label{sec: construction}
Both the proof of Theorem \ref{th: construction} and of Theorem \ref{th: isomorphism upper bound} will follow from quite similar constructions. Let us first collect some notation and results about the spectra of graphs. We refer the reader to~\cite{spectra, zbMATH07864382} for a comprehensive study of the spectra of graphs and graph operations.

\begin{lemma}[see, e.g.,~\cite{spectra}]\label{spectra of complete}
    The eigenvalues of the complete graph on $n$ vertices are $n-1$ of multiplicity $1$, and $-1$ of multiplicity $n-1$. 
    The eigenvalue of the empty graph on $n$ vertices is zero with multiplicity $n$.
\end{lemma}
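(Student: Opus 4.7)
The plan is to handle the two graphs separately, reducing each to a direct computation on a very structured matrix. For the empty graph on $n$ vertices, the adjacency matrix is simply the $n \times n$ zero matrix, so every eigenvalue equals $0$, with geometric (and algebraic) multiplicity $n$. That disposes of the second claim immediately.

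For the complete graph $K_n$, I would write its adjacency matrix as $A = J - I$, where $J$ denotes the $n \times n$ all-ones matrix and $I$ the identity. The key step is to determine the spectrum of $J$: the all-ones vector $\mathbf{1}$ is an eigenvector of $J$ with eigenvalue $n$, and $J$ has rank $1$, so its kernel is the $(n-1)$-dimensional hyperplane orthogonal to $\mathbf{1}$, contributing the eigenvalue $0$ with multiplicity $n-1$. Since the spectrum of $A = J - I$ is obtained by shifting the spectrum of $J$ by $-1$ (as $J$ and $I$ commute, or equivalently because $Jv = \mu v$ implies $(J-I)v = (\mu-1)v$), we conclude that $A$ has eigenvalue $n-1$ with multiplicity $1$ and eigenvalue $-1$ with multiplicity $n-1$.

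There is essentially no obstacle here: the only facts invoked are the rank of the all-ones matrix and the trivial observation that subtracting a scalar multiple of the identity shifts the spectrum. The proof is therefore a two-line verification, and I would present it exactly as above.
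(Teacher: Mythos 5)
Your proof is correct. The paper does not supply its own argument for this lemma (it is cited as a standard fact from a reference on spectra of graphs), and your derivation is exactly the textbook verification one would expect: the empty graph's adjacency matrix is the zero matrix, and for $K_n$ one writes $A = J - I$, reads off the spectrum of the rank-one matrix $J$ (eigenvalue $n$ on $\mathbf{1}$, eigenvalue $0$ on its $(n-1)$-dimensional orthogonal complement), and shifts by $-1$. Nothing is missing and no step fails.
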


We will make use of the following graph operation. Let $r>0$ be an integer. Let $G$ and $H$ be two graphs, such that $H$ is $r$-regular. The \textit{lexicographic product} (denoted by $\mathrm{lex}(G,H)$) of $G$ and $H$ has vertex set $V(G)\times V(H)$ and for every $u,v\in V(G)$ and $x,y\in V(H)$, $(u,x)$ is adjacent to $(v,y)$ if and only if either $uv\in E(G)$ or $u=v\wedge xy\in E(H)$.

One can think of such an operation as taking a graph $G$ and replacing each vertex of $G$ by a copy of $H$; in particular, when $H$ is the empty graph, this is simply the blow-up operation. The lexicographic product satisfies the following property (see, e.g.,~\cite{spectra}).
\begin{lemma}\label{spectra of lex}
    Suppose $\lambda_1,\dots, \lambda_n$ are the eigenvalues of a graph $G$ on $n$ vertices and $\mu_1=r,\dots,\mu_m$ are the eigenvalues of an $r$-regular graph $H$ on $m$ vertices. Then, the eigenvalues of the adjacency matrix of the lexicographic product $\mathrm{lex}(G,H)$ are $\lambda_im+r$ of multiplicity $1$ for $1\leq i\leq n$ and $\mu_j$ of multiplicity $n$ for $2\leq j\leq m$.
\end{lemma}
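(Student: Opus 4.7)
The plan is to express the adjacency matrix of $\mathrm{lex}(G,H)$ using Kronecker products and then read off the eigenvalues from bases of eigenvectors of the two factors. Indexing vertices lexicographically by pairs $(u,x) \in V(G) \times V(H)$ and unpacking the definition of the lexicographic product, one obtains
\[
A(\mathrm{lex}(G,H)) \;=\; A(G) \otimes J_m \;+\; I_n \otimes A(H),
\]
where $J_m$ is the all-ones $m\times m$ matrix and $I_n$ is the $n\times n$ identity. The first summand records the blow-up adjacencies coming from edges of $G$ (every pair in distinct fibres over adjacent vertices of $G$ is connected), while the second captures the internal $H$-edges inside each fibre.

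I would then exhibit two families of eigenvectors. For the first, let $x_i$ be an eigenvector of $A(G)$ with eigenvalue $\lambda_i$, and let $\mathbf{1}_m \in \mathbb{R}^m$ be the all-ones vector. Since $H$ is $r$-regular, $A(H)\mathbf{1}_m = r\mathbf{1}_m$, and clearly $J_m \mathbf{1}_m = m\mathbf{1}_m$, so the Kronecker identity $(A \otimes B)(u \otimes v) = Au \otimes Bv$ gives
\[
A(\mathrm{lex}(G,H))\,(x_i \otimes \mathbf{1}_m) \;=\; (\lambda_i m + r)\,(x_i \otimes \mathbf{1}_m),
\]
yielding $n$ eigenvalues of the form $\lambda_i m + r$. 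For the second family, for $j \geq 2$ pick an eigenvector $y_j$ of $A(H)$ with eigenvalue $\mu_j$ which, by symmetry of $A(H)$, can be chosen orthogonal to $\mathbf{1}_m$; this forces $J_m y_j = 0$. Then for every $z \in \mathbb{R}^n$,
\[
A(\mathrm{lex}(G,H))\,(z \otimes y_j) \;=\; A(G)z \otimes J_m y_j \;+\; z \otimes A(H) y_j \;=\; \mu_j\,(z \otimes y_j),
\]
so each $\mu_j$ is an eigenvalue with multiplicity at least $n$ (as $z$ ranges over a basis of $\mathbb{R}^n$).

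A dimension count closes the argument: the first family provides $n$ eigenvectors and the second provides $n(m-1)$, totalling $nm = \dim A(\mathrm{lex}(G,H))$. Linear independence follows from the standard fact that if $\{x_i\}_{1 \le i \le n}$ is a basis of $\mathbb{R}^n$ and $\{\mathbf{1}_m\} \cup \{y_j\}_{2 \le j \le m}$ is a basis of $\mathbb{R}^m$, then tensoring them produces a basis of $\mathbb{R}^{nm}$. The only delicate point in the proof is the orthogonality of $y_j$ to $\mathbf{1}_m$, which is precisely what kills the $A(G)\otimes J_m$ term in the second computation and forces the ``$j \geq 2$'' restriction in the statement; beyond that, I do not anticipate any real obstacle.
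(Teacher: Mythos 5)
Your proof is correct: the decomposition $A(\mathrm{lex}(G,H))=A(G)\otimes J_m+I_n\otimes A(H)$, the two families of eigenvectors $x_i\otimes\mathbf{1}_m$ and $z\otimes y_j$ with $y_j\perp\mathbf{1}_m$, and the dimension count together give exactly the stated spectrum. The paper does not prove this lemma at all --- it cites it from the literature on graph spectra --- and your Kronecker-product argument is precisely the standard proof found there, so there is nothing to reconcile.
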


Let us begin with the proof of Theorem \ref{th: isomorphism upper bound}, whose construction is slightly simpler.
\begin{proof}[Proof of Theorem \ref{th: isomorphism upper bound}]
Let $\delta>0$ be a constant. Let $d\in \mathbb{N}$ be sufficiently large, and let $n\in \mathbb{N}$ be such that $d,n$ satisfy the parity assumptions which are implicit below (in particular, $n$ is divisible by $d$). Let $d_0\coloneqq d_0(\delta)$ be the smallest integer satisfying $d_0\ge 3$ and $\sqrt{d_0}\ge \frac{4}{\delta}$. 

Assume first, for the sake of clarity of presentation, that $d$ is divisible by $d_0$. Let $n_0\coloneqq n\cdot d_0/d$, and let $H_1$ be an $(n_0, d_0, \lambda_0)$-graph with $\lambda_0\le 3\sqrt{d_0}$ --- indeed, such a graph exists, since a random $d_0$-regular graph on $n_0$ vertices typically satisfies this (see, e.g.,~\cite{KS06}). We note that we assume here that $n_0$ is sufficiently large with respect to $d_0$, and in turn, $n$ is sufficiently large with respect to $d$. Let $H_2$ be the empty graph on $d/d_0$ vertices. Set $G=\mathrm{lex}(H_1,H_2)$. By Lemmas \ref{spectra of complete} and \ref{spectra of lex} and by our choice of $d_0$, we have that $G$ is an $(n,d,\lambda)$-graph with
\begin{align*}
    \frac{\lambda}{d}=\frac{\lambda_0}{d_0}\le \frac{3}{\sqrt{d_0}}<\delta,
\end{align*}
and thus satisfies the assumption of Theorem \ref{th: isomorphism upper bound}.

Let $V(H_1)=\{v_1,\ldots, v_{n_0}\}$. Let $V_1,\ldots, V_{n_0}$ be subsets of $V(G)$, where $V_i$ is the blow-up of $v_i$. Note that, by construction, given two graphs $G_1,G_2\subseteq G$, if for every $i\in [n_0]$, $|V(G_1)\cap V_i|=|V(G_2)\cap V_i|$, then $G_1\cong G_2$. Therefore,
\begin{align*}
    \mu(G)\le \left(\frac{d}{d_0}+1\right)^{n_0}=\exp\left\{\frac{nd_0}{d}\log(d/d_0+1)\right\}\le \exp\left\{\frac{Cn\log d}{d}\right\},
\end{align*}
as required.

In the general case, when $d$ is not divisible by $d_0$, let us write $d=qd_0+r$ where $q\in \mathbb{N}$ and $1 \le r \le d_0-1$. Suppose first that $qr$ is even. We then set $H_2$ to be a graph on $q$ vertices, formed by taking a disjoint union of $\lfloor \frac{q}{r+1}\rfloor-1$ cliques of size $r+1$, and on the remaining $q-(r+1)\left(\lfloor \frac{q}{r+1}\rfloor-1\right)$ vertices we draw an arbitrary $r$-regular graph (indeed, such a graph exists since $qr$ and $r(r+1)$ are both even). If $qr$ is odd, then both $q$ and $r$ are odd, and thus $q-1$ is even. Let us write $d=(q-1)d_0+(d_0+r)$. We then set $H_2$ to be a graph on $q-1$ vertices, formed by taking a disjoint union $\lfloor\frac{q-1}{d_0+r+1}\rfloor-1$ cliques of size $d_0+r+1$, and on the remaining $(q-1)-(d_0+r+1)\left(\lfloor\frac{q-1}{d_0+r+1}\rfloor-1\right)$ vertices we draw an arbitrary $(d_0+r)$-regular graph (indeed, such a graph exists since $q-1$ and $(d_0+r)(d_0+r+1)$ are both even). The rest of the proof, for both cases (both choices of $H_2$), is quite similar to the case where $d$ is divisible by $d_0$.
\end{proof}

The construction for the proof of Theorem \ref{th: construction} is very similar, where instead of replacing every vertex with an independent set, we will replace every vertex with a copy of the complete graph.
\begin{proof}[Proof of Theorem \ref{th: construction}]
Let $\delta>0$ be a constant. Let $d\in \mathbb{N}$ be sufficiently large, and let $n\in \mathbb{N}$ be such that $d,n$ satisfy the parity assumptions which are implicit below (in particular, $n$ is divisible by $d$). Let $d_0$ be the smallest integer satisfying $d_0\ge 3$ and $\sqrt{d_0}\ge \frac{4}{\delta}$. 

Assume first, for the sake of clarity of presentation, that $d+1$ is divisible by $d_0+1$. Let $k\coloneqq \frac{d+1}{d_0+1}$, and let $n_0=n/k$. Let $H$ be an $(n_0,d_0,\lambda_0)$-graph satisfying that $\lambda_0\le 3\sqrt{d_0}$ (indeed, a random $d_0$-regular graph on $n_0$ vertices typically satisfies this). In particular, here too we assume that $n_0$ is sufficiently large with respect to $d_0$, and in turn, $n$ is sufficiently large with respect to $d$. Set $G=\mathrm{lex}(H,K_k)$. Noting that $kd_0+k-1=d$, we then have that $G$ is a $d$-regular graph on $kn_0=n$ vertices. Furthermore, by Lemmas \ref{spectra of complete} and \ref{spectra of lex} we have that the second largest eigenvalue of $G$, $\lambda$, satisfies that
\begin{align*}
    \frac{\lambda}{d}=\frac{k\lambda_0+k-1}{kd_0+k-1}\le \frac{4}{\sqrt{d_0}} \le \delta,
\end{align*}
and thus $G$ satisfies the assumptions of Theorem \ref{th: construction}.

By the construction of $G$, we have that a largest independent set of $G$ is of size at most $n_0=(d_0+1)n/d$. Therefore, a longest induced path in $G$ is of length at most $2(d_0+1)n/d$. Setting $C\coloneqq C(\delta)=2(d_0+1)$ completes the proof.

In the general case, where $d+1$ is not divisible by $d_0+1$, let $r$ be the residue of $d+1$ modulo $d_0+1$. Let $k\coloneqq \frac{d+1-r}{d_0+1}$, and let $H_2$ be a copy of $K_k$ with an $r$-regular graph removed. We then take $G=\mathrm{lex}(H,H_2)$. The rest of the proof is quite similar to the case where $d+1$ is divisible by $d_0+1$.
\end{proof}

\paragraph{Acknowledgements} Part of this work was done while the fourth author was visiting Tel Aviv University, and he would like to thank the university for its hospitality. The second author was supported in part by NSF-BSF grant 2023688.

\bibliographystyle{abbrv} 
\bibliography{Bib}

\end{document}